\newcommand{\etal}{\textit{et al.~}}
\newcommand{\ie}{\textit{i}.\textit{e}.,~}
\DeclareMathOperator*{\argmin}{arg\,min}
\newtheorem{thm}{Theorem}
\newtheorem{lemma}[thm]{Lemma}
\newtheorem{prop}[thm]{Proposition}
\begin{document}
	
	\title{A Fast Convergent Ordered-Subsets Algorithm with Subiteration-Dependent Preconditioners for PET Image Reconstruction}
	
	\author{Jianfeng~Guo,~C.~Ross~Schmidtlein,~Andrzej~Krol,~Si~Li,~Yizun~Lin,~Sangtae~Ahn, Charles~Stearns,~and~Yuesheng~Xu
		\thanks{J. Guo is supported by the Special Project on High-performance Computing under the National Key R\&D Program (No. 2016YFB0200602) and by the National Natural Science Foundation of China under grant 11771464; C. R. Schmidtlein is partially supported by the Memorial Sloan Kettering Cancer Center's support grant from the National Cancer Institute (P30 CA008748); C. R. Schmidtlein, A. Krol and Y. Xu are  partially supported by the National Cancer Institute of the National Institutes of Health under Award Number R21CA263876; S. Li is supported by the Natural Science Foundation of Guangdong Province under grant 2022A1515012379, by the Opening Project of Guangdong Province Key Laboratory of Computational Science at Sun Yat-sen University under grant 2021007 and by the National Natural Science Foundation of China under grant 11771464; Y. Lin is supported by Guangdong Basic and Applied Basic Research Foundation under grant 2021A1515110541, by the Fundamental Research Funds for the Central Universities of China under grant 21620352 and by the Opening Project of Guangdong Province Key Laboratory of Computational Science at Sun Yat-sen University under grant 2021006; Y. Xu is supported by the US National Science Foundation under grant DMS-1912958. The content is solely the responsibility of the authors and does not necessarily represent the official views of the National Institutes of Health.  \emph{(Corresponding author: Yuesheng Xu.)}}
		\thanks{Jianfeng Guo is with the School of Computer Science and Engineering, Sun Yat-sen University, Guangzhou, 510275, China (guojf3@mail2.sysu.edu.cn).}
		\thanks{C. Ross Schmidtlein is with the Department of Medical Physics, Memorial Sloan Kettering Cancer Center, New York, NY 10065, USA (schmidtr@mskcc.org).}
		\thanks{Andrzej Krol is with the Departments of Radiology and Pharmacology, SUNY Upstate Medical University, 750 East Adams Street, Syracuse
			NY 13210, USA (krola@upstate.edu).}
		\thanks{Si Li is with School of Computer Science and Technology, Guangdong University of Technology, Guangzhou 510006, China (sili@gdut.edu.cn).}
		\thanks{Yizun Lin is with Department of Mathematics, College of Information Science and Technology, Jinan University, Guangzhou 510632, China (linyizun@jnu.edu.cn).}
		\thanks{Sangtae Ahn is with GE Research, Niskayuna, NY 12309, USA (ahns@ge.com).}
		\thanks{Charles Stearns is with GE Healthcare, Waukesha, WI 53188, USA (stearnsc@alum.mit.edu).}
		\thanks{Yuesheng Xu is with Department of Mathematics and Statistics, Old Dominion University, Norfolk, VA 23529, USA (y1xu@odu.edu).}
	}
	
	\def \cA {{\cal A}}
	\def \cB {{\cal B}}
	\def \cD {{\cal D}}
	\def \cE {{\cal E}}
	\def \cF {{\cal F}}
	\def \cG {{\cal G}}
	\def \cH {{\cal H}}
	\def \cI {{\cal I}}
	\def \cJ {{\cal J}}
	\def \cK {{\cal K}}
	\def \cL {{\cal L}}
	\def \cM {{\cal M}}
	\def \cN {{\cal N}}
	\def \cO {{\cal O}}
	\def \cP {{\cal P}}
	\def \cR {{\cal R}}
	\def \cS {{\cal S}}
	\def \cU {{\cal U}}
	\def \cV {{\cal V}}
	\def \cW {{\cal W}}
	\def \cX {{\cal X}}
	\def \cY {{\cal Y}}
	\def \cZ {{\cal Z}}
	
	\def \mcI {\mathcal{I}}
	\def \mcN {\mathcal{N}}
	\def \mcT {\mathcal{T}}
	
	\renewcommand{\Bbb}{\mathbb}
	\def \bR {\Bbb R}
	\def \bN {\Bbb N}
	\def \bS {\Bbb S}
	
	\def \bfA {{\bm A}}
	\def \bfB {{\bm B}}
	\def \bfD {{\bm D}}
	\def \bfE {{\bm E}}
	\def \bfG {{\bm G}}
	\def \bfH {{\bm H}}
	\def \bfI {{\bm I}}
	\def \bfP {{\bm P}}
	\def \bfQ {{\bm Q}}
	\def \bfR {{\bm R}}
	\def \bfS {{\bm S}}
	\def \bfT {{\bm T}}
	\def \bfW {{\bm W}}
	\def \bfLambda {{\bm\Lambda}}
	
	\def \bfgamma {{\bm\gamma}}
	\def \bfepsilon {{\bm\epsilon}}
	\def \bfalpha {{\bm\alpha}}
	\def \bfbeta {{\bm\beta}}
	\def \bflambda {{\bm\lambda}}
	\def \bfdelta {{\bm\delta}}
	\def \bftheta {{\bm\theta}}
	\def \bfnu {{\bm\nu}}
	\def \bfmu {{\bm\mu}}
	\def \bfb {{\bm b}}
	\def \bfc {{\bm c}}
	\def \bfd {{\bm d}}
	\def \bfe {{\bm e}}
	\def \bff {{\bm f}}
	\def \bfg {{\bm g}}
	\def \bfh {{\bm h}}
	\def \bfJ {{\bm J}}
	\def \bfq {{\bm q}}
	\def \bfu {{\bm u}}
	\def \bfv {{\bm v}}
	\def \bfw {{\bm w}}
	\def \bfx {{\bm x}}
	\def \bfy {{\bm y}}
	\def \bfz {{\bm z}}
	
	\def \bRM {\bR^{M}}
	\def \bRm {\bR^{m}}
	\def \bRmo {\bR^{m_0}}
	\def \bRn {\bR^{n}}
	\def \bRd {\bR^{d}}
	\def \bRpd {\bR_+^{d}}
	\def \bRpn {\bR_+^{n}}
	\def \bRnn {\bR^{n\times n}}
	\def \bRmm {\bR^{m\times m}}
	\def \bRmomo {\bR^{m_0\times m_0}}
	\def \bRmod {\bR^{m_0\times d}}
	\def \bRdd {\bR^{d\times d}}
	\def \bSn {\bS^{n}}
	\def \bSpd {\bS_+^{d}}
	\def \bSpn {\bS_+^{n}}
	\def \bSpm {\bS_+^{m}}
	\def \bSpmo {\bS_+^{m_0}}
	\def \indi {\iota_{\bR_+^d}}
	\def \prox {\mathrm{prox}}
	\def \diag {\mathrm{diag}}
	\def \TMC {\text{TMC}}
	\def \EM {\text{EM}}
	\def \IEM {\text{IEM}}
	
	\def \xk {{\bm x}^{k}}
	\def \yk {{\bm y}^{k}}
	\def \zk {{\bm z}^{k}}
	\def \fk {{\bm f}^{k}}
	\def \hk {{\bm h}^{k}}
	\def \bk {{\bm b}^{k}}
	\def \ck {{\bm c}^{k}}
	\def \pk {p^{k}}
	\def \qk {{\bm q}^{k}}
	\def \tk {t^{k}}
	\def \uk {{\bm u}^{k}}
	\def \vk {{\bm v}^{k}}
	
	\def \xkk {{\bm x}^{k+1}}
	\def \ykk {{\bm y}^{k+1}}
	\def \zkk {{\bm z}^{k+1}}
	\def \fkk {{\bm f}^{k+1}}
	\def \hkk {{\bm h}^{k+1}}
	\def \bkk {{\bm b}^{k+1}}
	\def \ckk {{\bm c}^{k+1}}
	\def \pkk {p^{k+1}}
	\def \qkk {{\bm q}^{k+1}}
	\def \tkk {t^{k+1}}
	\def \ukk {{\bm u}^{k+1}}
	\def \vkk {{\bm v}^{k+1}}
	
	\def \tu {\tilde{{\bm u}}}
	\def \tuk {\tilde{{\bm u}}^{k}}
	\def \tvk {\tilde{{\bm v}}^{k}}
	\def \txk {\tilde{{\bm x}}^{k}}
	\def \tyk {\tilde{{\bm y}}^{k}}
	\def \tzk {\tilde{{\bm z}}^{k}}
	\def \tfk {\tilde{{\bm f}}^{k}}
	\def \thk {\tilde{{\bm h}}^{k}}
	\def \tbk {\tilde{{\bm b}}^{k}}
	\def \tck {\tilde{{\bm c}}^{k}}
	\def \tqk {\tilde{{\bm q}}^{k}}
	
	\def \tbkk {\tilde{{\bm b}}^{k+1}}
	\def \tckk {\tilde{{\bm c}}^{k+1}}
	\def \tqkk {\tilde{{\bm q}}^{k+1}}
	\def \tukk {\tilde{{\bm u}}^{k+1}}
	\def \tvkk {\tilde{{\bm v}}^{k+1}}
	\def \txkk {\tilde{{\bm x}}^{k+1}}
	\def \tykk {\tilde{{\bm y}}^{k+1}}
	\def \tfkk {\tilde{{\bm f}}^{k+1}}
	\def \thkk {\tilde{{\bm h}}^{k+1}}
	
	\def \fkl {{\bm f}^{k,l}}
	\def \fkll {{\bm f}^{k,l-1}}
	\def \fkij {\bff^{k,i}_j}
	\def \fkiij {\bff^{k,i-1}_j}
	\def \tfki {\tilde{{\bm f}}^{k,i}}
	\def \tfkii {\tilde{{\bm f}}^{k,i-1}}
	\def \hkl {{\bm h}^{k,l}}
	\def \hkll {{\bm h}^{k,l-1}}
	\def \bkl {{\bm b}^{k,l}}
	\def \bkll {{\bm b}^{k,l-1}}
	\def \tbkl {\tilde{{\bm b}}^{k,l}}
	\def \tbkll {\tilde{{\bm b}}^{k,l-1}}
	\def \ckl {{\bm c}^{k,l}}
	\def \ckll {{\bm c}^{k,l-1}}
	\def \tckl {\tilde{{\bm c}}^{k,l}}
	\def \tckll {\tilde{{\bm c}}^{k,l-1}}
	\def \Skl {{\bm S}^{k,l}}
	\def \Skll {{\bm S}^{k,l-1}}
	
	\maketitle

	\begin{abstract}
		We investigated the imaging performance of a fast convergent ordered-subsets algorithm with subiteration-dependent preconditioners (SDPs) for positron emission tomography (PET) image reconstruction. In particular, we considered the use of SDP with the block sequential regularized expectation maximization (BSREM) approach with the relative difference prior (RDP) regularizer due to its prior clinical adaptation by vendors. Because the RDP regularization promotes smoothness in the reconstructed image, the directions of the gradients in smooth areas more accurately point toward the objective function’s minimizer than those in variable areas. Motivated by this observation, two SDPs have been designed to increase iteration step-sizes in the smooth areas and reduce iteration step-sizes in the variable areas relative to a conventional expectation maximization preconditioner.  The momentum technique used for convergence acceleration can be viewed as a special case of SDP.
		We have proved the global convergence of SDP-BSREM algorithms by assuming certain characteristics of the preconditioner. 
		By means of numerical experiments using both simulated and clinical PET data, we have shown that the SDP-BSREM algorithms substantially improve the convergence rate, as compared to conventional BSREM and a vendor's implementation as Q.Clear. Specifically, SDP-BSREM algorithms converge 35\%-50\% faster in reaching the same objective function value than conventional BSREM and commercial Q.Clear algorithms. Moreover, we showed in phantoms with hot, cold and background regions that the SDP-BSREM algorithms approached the values of a highly converged reference image faster than conventional BSREM and commercial Q.Clear algorithms.
	\end{abstract}
	
	\begin{IEEEkeywords}
		Image reconstruction, ordered-subsets, positron emission tomography, preconditioner, relative difference prior
	\end{IEEEkeywords}
	
	\section{Introduction}
	
	\IEEEPARstart{P}{ositron} emission tomography (PET) data are inherently count limited due to health consideration, basic physical processes, and patient tolerance.  Moreover, these data must be reconstructed into images within a few minutes of acquisition.  This creates a challenging situation in which vendors strive to produce high quality images in a clinically viable time frame. In this study, we introduce a method for accelerating the reconstruction of high quality PET images.
	
	Over last 20 years, the non-penalized maximum-likelihood (ML) statistical approaches have become a preferred model for the reconstruction of PET \cite{Shepp1982maximum,Lange1984em}. However, when iterated to full convergence, ML methods produce extremely noisy images, and are sensitive to small statistical perturbations in the data. Hence, these methods are seldom run to full convergence and iterations are stopped before fitting noise becomes unacceptable at the expense of excessive blur in the reconstructed images.  It has been demonstrated that applications of penalized likelihood (PL) models that include a data fidelity term (Kullback-Leibler divergence) and a regularization term leads to improved quantification and better noise suppression, as compared to non-penalized reconstructions \cite{ahn2015quantitative}. To reduce the computational expense, ordered-subsets expectation maximization (OSEM) algorithms proposed by Hudson and Larkin are widely used in unregularized PET image reconstruction \cite{Hudson1994Accelerated}.  However, OSEM is unsuitable for regularized image reconstruction leading to the development of relaxation \cite{Browne1996alternative,Bertsekas1997A} and the block sequential regularized expectation maximization (BSREM) algorithm \cite{de2001fast}.
	
	Initially, quadratic penalties were explored \cite{ahn2003globally,tsai2018fast}, but they had resulted in over-smoothed edges and loss of details in the reconstructed images. Later, a number of other penalties were developed to address these problems but they often had undesirable properties including nonsmooth \cite{rudin1992nonlinear,bredies2010total}, noncovex \cite{Geman1987Statistical}, or requiring additional hyper-parameters \cite{Mumcuoglu1996Bayesian}. 
	An example of such an approach is the total variation penalty that is able to preserve sharp boundaries between low-variability regions \cite{rudin1992nonlinear,Zhang2016investigation}. Thus, ability to deal with non-smooth priors became an urgent issue, However, only a few reconstruction algorithms have been able to combine the Poisson noise model and non-smooth priors \cite{chambolle2011first,krol2012preconditioned,li2015effective}.\par
	
	In an alternative approach, Nuyts \etal \cite{nuyts2002concave} introduced the relative difference prior (RDP) that preserved high spatial frequencies in reconstructed images while still being smooth and convex. This RDP was adopted by General Electric (GE) Healthcare as the penalty term in their PL PET reconstruction model. The penalty term is controlled by a single user-defined parameter called beta. The GE Healthcare introduced a modified BSREM algorithm \cite{ahn2003globally} to solve the PL model in their commercial clinical software, called Q.Clear, that is currently available on GE PET/CT scanners \cite{ahn2015quantitative}. Other interesting methods, suitable for optimization with smooth penalties, include the optimization transfer descent algorithm (OTDA) \cite{wang2012penalized,wang2015edge} and the preconditioned limited-memory Broyden-Fletcher-Goldfarb-Shanno with boundary constraints algorithm (L-BFGS-B-PC) \cite{tsai2018fast}.  While these algorithms converge very rapidly, they represent a substantial departure from the BSREM algorithm complicating their implementation.
	
	The choice of preconditioners in the algorithm is well known to strongly affect the convergence rate \cite{mumcuoglu1994fast,krol2012preconditioned,tsai2018fast,lin2019krasnoselskii}. The widely used preconditioners have been designed based on the EM matrix \cite{mumcuoglu1994fast,krol2012preconditioned,lin2019krasnoselskii,Ehrhardt2019fast} or the Hessian matrix \cite{tsai2018fast}. As part of the BSREM convergence proof, Ahn and Fessler \cite{ahn2003globally} presented the subiteration-independent preconditioner, which can be viewed as a uniform operator of the image for all subiteration. However, a subiteration-independent preconditioner is overly restrictive and may result in a slower convergence rate. We believe that a well-designed subiteration-dependent preconditioner (SDP) 
	will accelerate the algorithm convergence.
	
	In the present study, we propose a subiteration-dependent preconditioned BSREM (SDP-BSREM) for the RDP regularized PET image reconstruction. We prove that it is convergent under certain assumptions imposed on the preconditioner. According to the smoothness-promoting property of the RDP regularization in the reconstructed image, the directions of the gradients in smooth areas more accurately point toward the objective function's minimizer than those in variable areas. Inspired by this observation, we propose two SDPs satisfying the assumptions needed for the convergence proof. We note that the momentum technique is a special case of SDP. We have used the numerical gradient of the image to measure its smoothness. These two SDPs achieve larger step-sizes in the smooth areas of the image and smaller step-sizes in the variable areas of the image. The proposed algorithms have been compared with BSREM for simulations and with the Q.Clear method with data acquired from a GE PET/CT. In simulations, two numerical phantoms were used. In the clinical comparisons, data from a whole-body PET patient and an American College of Radiology (ACR) quality assurance phantom (Esser Flangeless PET phantom) were used both with and without time-of-flight data. 
	
	This paper is organized in five sections. In section II, we first describe the RDP regularized PET image reconstruction model and the modified BSREM algorithm and then develop our new SDP-BSREM algorithm. In section III, proofs for convergence of SDP-BSREM are provided with and without an interior assumption and four SDPs satisfying the convergence conditions are presented as well. Comprehensive comparisons of the results obtained for simulated and clinical data obtained by means of our proposed SDP-BSREM methods versus BSREM and Q.Clear are provided in section IV. The conclusions are presented in section V. Two appendices with additional details are also provided.
	
	\section{Methodology}
	
	In this section, we develop the SDP-BSREM algorithm for solving the RDP regularized PET image reconstruction model.
	
	\subsection{RDP Regularized PET Image Reconstruction Model}
	We denote by $\bR_+$ the set of all nonnegative real numbers, by $ \bR_{++}$ the set of all positive real numbers, by $\bN$ the set of positive integers, and by $ \bN_0:=\bN\cup\left\lbrace0\right\rbrace $.
	For $p, q\in \bN$, we let $\bfA\in\bR_+^{p\times q}$ denote the PET system matrix whose entries are the probability of detection of the positron annihilation gamma photon pairs emitted from a particular voxel containing PET radiotracer, and let $\bfgamma\in\mathbb{R}_{++}^{p}$ denote the mean value of the background events produced by random and scatter coincidences. 
	The relation of the radiotracer distribution $\bff\in\bR^q_+$ within a patient with the projection data $\bfg\in\bR_{+}^{p}$ acquired by a PET scanner 
	is described by the Poisson model
	\begin{equation}\label{model-Poisson}
	\bfg=\text{Poisson}(\bfA\bff+\bfgamma),
	\end{equation}
	where $\text{Poisson}(\bfx)$ denotes a Poisson-distributed random vector with mean $\bfx$.
	
	Model \eqref{model-Poisson} may be solved by minimizing the fidelity term
	\begin{equation}\label{eq:FidelityTerm}
	F(\bff) \coloneqq  \langle\bfA\bff,{\bf1}_p\rangle-\langle\ln\left(\bfA\bff+\bfgamma\right),\bfg\rangle,
	\end{equation}
	where ${\bf1}_p\in\bR^p$ denotes the vector with all components 1, $\ln\bfx\coloneqq [\ln x_1, \ln x_2,\dotsc,\ln x_n]^\top$ is the logarithmic function at a vector $\bfx\in\bRn_{++}$ and $ x_i $ is the $ i $-th component of $ \bfx $, and $\langle \bfx,\bfy\rangle\coloneqq \sum_{i=1}^{n}x_iy_i$ denotes the inner product of $\bfx,\bfy\in\bRn$. It is well-known that model \eqref{eq:FidelityTerm} is ill-posed \cite{yu2002edge} and its solutions may result in over-fitting in reconstructed images. Regularization is often used to avoid the over-fitting problem. A commonly used regularized PET image reconstruction model has the following form: 
	\begin{equation}\label{model-RDP}
	\argmin_{\bff\in\bR^q_+} \Phi(\bff),
	\end{equation}
	where
	\begin{equation}\label{eq:ObjectiveFunction}
	\Phi(\bff)\coloneqq  F(\bff)+\beta R(\bff),
	\end{equation}
    with $ \beta\in\bR_+ $ being the regularization parameter and $R(\bff)$ representing the regularization term. In this study, we will consider the RDP \cite{nuyts2002concave} regularization term that is given by
	\begin{equation}
	R(\bff) \coloneqq  \sum_{j=1}^q{\sum_{k \in N_j}{ \frac{(f_j-f_k)^2}{(f_j+f_k)+\gamma_{R}\left|f_j-f_k\right|+\epsilon}}},
	\label{eq:RDP}
	\end{equation}
	where $\gamma_{R}\in\mathbb{R}_{+}$ controls the degree of edge preservation, and $N_j$ is the neighborhood of pixel $j$. 
	
	In model \eqref{eq:RDP}, a small constant $\epsilon>0 $ is added to the denominator to avoid singularities when both $ f_j $ and $ f_k $ are equal to zero. By its definition, RDP is a function of both differences of neighboring pixels and their sums. The inclusion of the sums term makes RDP differ from conventional regularization terms and causes the regularizer to be activity-dependent. We note that the function $ \Phi(\bff) $ is twice differentiable since both $ F(\bff) $ and $ R(\bff) $ are twice differentiable \cite{krol2012preconditioned,nuyts2002concave}. The inclusion of a small constant $\epsilon$ in the denominator of RDP provides the objective function $\Phi$ with two useful properties (the proofs are provided in appendix \ref{appendix:ConvexAndGradient}): (i) it is strictly convex under an assumption that $\bfA^\top \bfg$ is a nonzero vector; and (ii) it has a Lipschitz continuous gradient on $\bR^q_+$. 
	
	\subsection{Modified BSREM Algorithm}
	A modified BSREM \cite{ahn2003globally} was adopted by GE Healthcare as the optimizer in the Q.Clear method \cite{ahn2015quantitative} for solving the model \eqref{model-RDP}. Here we describe and review the modified BSREM.\par
	
	Minimization problem  \eqref{model-RDP} is often solved by the gradient descent method.
	However, computing the whole gradient $ \nabla\Phi $ is computationally expensive. To alleviate this issue, the ordered-subsets (OS) algorithm was developed to accelerate its convergence \cite{Hudson1994Accelerated,Browne1996alternative,Bertsekas1997A,de2001fast}. For  $n\in \bN$, let $\bN_n\coloneqq \left\lbrace 1,2,\ldots,n\right\rbrace.$  For $M\in \bN$, let ${\cal I}:=\{I_i: i\in\bN_M\}$ be a collection of disjoint subsets of $\bN_p$ such that $\bigcup_{i=1}^M I_i=\bN_p.$ The partition $\cI$ is chosen as in \cite{ahn2003globally}. According to the partition ${\cal I}$, we partition the system matrix $ \bfA $ into $M$ row sub-matrices $\bfA_i$, and $\bfg$ and $\bfgamma$ into $M$ sub-vectors $\bfg_i$ and  $\bfgamma_i$, respectively, for $i\in\bN_M$. We use $|\Omega|$ to denote the cardinality of set $\Omega$.
	For $i\in\bN_M$, we define
	\begin{equation}\label{eq:SubObjFunc}
	\Phi_i(\bff)\coloneqq \langle\bfA_i\bff,{\bf1}_{\left|I_i\right|}\rangle-\langle\ln\left(\bfA_i\bff+\bfgamma_i\right),\bfg_i\rangle+\frac{\beta}{M} R(\bff).
	\end{equation}
	It follows that
	$\Phi(\bff)=\sum_{i=1}^M \Phi_i(\bff).$ An OS algorithm computes only one $\nabla\Phi_i$ at each subiteration step.
	
	A subiteration-independent preconditioned OS algorithm was proposed in \cite{ahn2003globally}. The  preconditioner is designed by using an upper bound of the solution set of minimization problem  \eqref{model-RDP}.
	It was proved in \cite{ahn2003globally} that for any projection data $\bfg,$ there exists a constant $U>0$ such that the solution set $\cS^*$ of minimization problem \eqref{model-RDP} is contained in the bounded set
	\begin{equation}\label{eq:BoundedSet}
	\cB\coloneqq \left\lbrace \bff:\bff\in\bR^q_+, \ 0\leqslant f_j\leqslant U,\ j\in\bN_q\right\rbrace.
	\end{equation}
	That is, $\cS^*\subset \cB$.
	For $ \bff\in\cB, $ a subiteration-independent diagonal preconditioner $ \bfS(\bff) $ is defined as 
	\begin{equation}\label{eq:preconditioner}
	S(\bff)_{jj}\coloneqq \begin{cases}
	f_j/p_j & \text{if } 0\leq f_j < U/2,\\
	(U-f_j)/(p_j) & \text{if } U/2\leq f_j \le U,
	\end{cases}
	\end{equation}
	where $p_j$ are defined by
	\begin{equation}\label{eq:pj}
	{p_j}\coloneqq \begin{cases}
	    (\bfA^{\top}{\bf1}_p)_j/M & \text{if } (\bfA^\top{\bf1}_p)_j>0,\\
	    1/M & \text{if } (\bfA^\top{\bf1}_p)_j=0,
	    \end{cases}\ \ \mbox{for}\ \ j\in\bN_q.
	\end{equation}
	Note that the preconditioner $S$ is uniform for all iterations.

	For a small $t\in(0,U)$ and $ \bff\in\bR^q, $ an operator $ P_{t} : \bR^q \to \cB $ is defined by
	\begin{equation}\label{eq:OperatorPt}
	P_t(\bff)_j\coloneqq \begin{cases}
	t & \text{if } f_j \leqslant 0,\\
	U-t & \text{if } f_j\geqslant U,\\
	f_j & \text{otherwise}.
	\end{cases} 
	\end{equation}
Using operator $P_{t}$, the modified BSREM algorithm \cite{ahn2003globally} may be described as for $k\in\bN_0$, $i\in\bN_M$,
	\begin{equation}
	\begin{dcases}
	\tilde{\bff}^{k,i}=\bff^{k,i-1}-\lambda_k\bfS(\bff^{k,i-1})\nabla\Phi_i(\bff^{k,i-1}),\\
	\bff^{k,i}=P_{t}(\tilde{\bff}^{k,i}),\\
	\end{dcases} 
	\end{equation}
	with $ \bff^{k,0}:=\bff^k$, $\bff^{k+1}:=\bff^{k,M},$
	where $\lambda_k>0$ is the relaxation parameter. For simplicity of notation, we will refer to the modified form of BSREM simply as BSREM.
	
	\subsection{BSREM with Subiteration-Dependent Preconditioners}
	In this subsection we propose subiteration-dependent preconditioners (SDPs). To motivate them, we review the momentum approach. The momentum is an acceleration technique widely used in optimization \cite{Nesterov1983Method, beck2009fast, Zeng2018FixFISTA}. The Nesterov momentum \cite{Nesterov1983Method} has been combined with OS by Kim \etal \cite{kim2015combining} for CT image reconstruction. Recently, Lin \etal \cite{lin2019krasnoselskii} successfully applied a different form of momentum to PET image reconstruction. However, no explicit convergence proof has been provided for the OS combined momentum methods. Instead, Kim \etal proved that the expectation of the successive steps converged, while Lin \etal proved convergence for the non-OS method. The momentum technique used in \cite{lin2019krasnoselskii} can be described as follows:  for $k\in\bN_0$, $i\in\bN_M$,
	\begin{equation}
	\begin{dcases}
	\tilde{\bff}^{k,i}=\max[\bff^{k,i-1}-\lambda_k\bfS(\bff^{k,i-1})\nabla\Phi_i(\bff^{k,i-1}),0],\\
	\bff^{k,i}=(1-\alpha_{k,i})\bff^{k,i-1}+\alpha_{k,i}\tilde{\bff}^{k,i},
	\end{dcases}
	\end{equation}
	where $\alpha_{k,i}>1$ is the momentum sequence. Under an assumption that  $\tilde{\bff}^{k,i}$ are non-negative, one can obtain 
	\begin{equation}\label{eq:KMmomentum}
	\bff^{k,i}=\bff^{k,i-1}-\lambda_k\alpha_{k,i}\bfS(\bff^{k,i-1})\nabla\Phi_i(\bff^{k,i-1}).
	\end{equation}
	By letting $\bfS^{k,i}(\bff)\coloneqq \alpha_{k,i}\bfS(\bff)$, we may reinterpret $\bfS^{k,i}$ as an  SDP and \eqref{eq:KMmomentum} as a BSREM algorithm with  $\bfS^{k,i}$. Inspired by this, we introduce SDPs by setting $\bfS^{k,i}(\bff)\coloneqq \diag(\bfalpha^{k,i})\bfS(\bff)$, where $\bfalpha^{k,i}$ is a positive vector sequence and $\diag ({\bfy})$ denotes the diagonal matrix with the diagonal entries being the components of the vector ${\bfy}$. Using the same notation as for BSREM above, we have arrived at the SDP-BSREM algorithm for solving model \eqref{model-RDP} given in Table \ref{tab:SDPBSREM}.
	
	\begin{table}[htp!]
		\centering
		\caption{SDP-BSREM Algorithm}
		\begin{tabular}{|l|}
			\hline
			\begin{math}
			\begin{aligned}
			&\text{Preparation:~} \bff^0, M, T, P_{t} \text{~is~defined~in~\eqref{eq:OperatorPt}}\\
			&\textbf{for}~ k=0,1,2,\dotsc,T\\
			&\quad\bff^{k,0}=\fk\\
			&\quad\textbf{for}~ i=1,2,\dotsc,M\\
			&\quad\quad\tilde{\bff}^{k,i}=\bff^{k,i-1}-\lambda_k\bfS^{k,i}(\bff^{k,i-1})\nabla\Phi_i(\bff^{k,i-1})\qquad \\
			&\quad\quad\bff^{k,i}=P_{t}(\tilde{\bff}^{k,i})\\
			&\quad\textbf{end}\\
			&\quad\bff^{k+1}=\bff^{k,M}\\
			&\textbf{end}\\
			\end{aligned}\end{math}\\
			\hline
		\end{tabular}
		\label{tab:SDPBSREM}
	\end{table}
	
	Bearing in mind the momentum concept defined in \cite{lin2019krasnoselskii}, it is clear that the iteration sequence provided in \eqref{eq:KMmomentum} is a special case of our proposed SDP-BSREM algorithm with $\bfalpha^{k,i}:=\alpha_{k,i}{\bf1}_q$. For this reason, we expect that our proposed SDP-BSREM algorithm setting will allow us to choose an SDP to yield the convergence acceleration. We will evaluate its performance by means of numerical experiments to be presented in section \ref{sec:NumRes}.\par

	\section{Convergence of SDP-BSREM algorithm}
	\label{section_Convergence_of_SDP-BSREM_algorithm}
	
	In this section we present convergence properties of the SDP-BSREM algorithm. We also describe four specific SDPs that satisfy the convergence condition. To this end, we assume that the objective function $ \Phi $ satisfies the hypothesis: 
	\begin{enumerate}[(i).]
		\item $\Phi$ has a unique minimizer on $\cB$; 
		\item $ \Phi$ is convex and twice differentiable on $ \cB $; 
		\item $\nabla\Phi_i$ are Lipschitz continuous on $\cB$ for all $i\in\bN_M$.
	\end{enumerate}
	\par
	The use of SDPs results in scaled subset gradients with their sum inconsistent with the scaled full gradient. It complicates the convergence proof and requires additional assumptions on the preconditioner to make the inconsistencies asymptotically approach zero. For a general SDP, our proposed SDP-BSREM algorithm may not converge \cite{ahn2003globally}. Nevertheless, by imposing certain assumptions on $ \bfS^{k,i}, $ convergence to the desired optimum point can be ensured. For a SDP $ \bfS^{k,i}(\bff) = \diag(\bfalpha^{k,i})\bfS(\bff) $ with $\bfS(\bff)$ defined in \eqref{eq:preconditioner}, the required additional assumptions are as follows: 
	\begin{enumerate}[(i).]\addtocounter{enumi}{+3}
		\item The relaxation sequence satisfies $\sum_{k=0}^\infty\lambda_k=\infty$ and $\sum_{k=0}^\infty\lambda^2_k<\infty$;
		\item There exists a positive vector $\bfalpha$ such that $\lim_{k\to\infty}\bfalpha^{k,i} =\bfalpha $ for all $ i\in\bN_M $;
		\item The vector series $ \sum_{k=0}^\infty\lambda_k(\bfalpha-\bfalpha^{k,i})$ converge for all $i\in\bN_M$. 
	\end{enumerate}
	\par
	Note that condition (iv) was imposed in \cite{ahn2003globally,Li2006A} for convergence proofs for relaxed OS algorithms. Conditions (v) and (vi) were imposed to overcome difficulties caused by the use of different preconditioners in different subiterations.
	
	We now state a lemma regarding the Lipschitz continuity properties of SDPs. The proof is included in Appendix \ref{appendix:lemconvergence}.
	\begin{lemma}\label{lem:LipConPrecond}
		If conditions (iii) and (v) are satisfied, then $ \bfS^{k,i}(\bff)\nabla\Phi_i(\bff) $ are uniformly bounded and  Lipschitz continuous on $ \cB $ with Lipschitz constants bounded above by a uniform constant, for all $ k\in\bN_0, i\in\bN_M$.
	\end{lemma}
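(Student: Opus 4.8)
The plan is to assemble a handful of uniform bounds from compactness of $\cB$ together with conditions (iii) and (v), and then combine them through the elementary product rule for Lipschitz maps. First I would extract the only feature of condition (v) the argument needs: since $\bfalpha^{k,i}\to\bfalpha$ for each of the \emph{finitely many} indices $i\in\bN_M$, and every convergent sequence is bounded, the whole family $\{\bfalpha^{k,i}:k\in\bN_0,\ i\in\bN_M\}$ admits a single uniform bound $\|\diag(\bfalpha^{k,i})\|\le C_\alpha$. Note that the operator $\bfS^{k,i}(\bff)\nabla\Phi_i(\bff)=\diag(\bfalpha^{k,i})\,\bfS(\bff)\,\nabla\Phi_i(\bff)$ factors as the constant-scaling matrix $\diag(\bfalpha^{k,i})$ times the $k$-independent map $\bfS(\bff)\nabla\Phi_i(\bff)$, so the bulk of the work reduces to the latter.

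For the boundedness claim I would bound the two factors of $\bfS(\bff)\nabla\Phi_i(\bff)$ separately on $\cB$. For $\bff\in\cB$ the diagonal entries of $\bfS(\bff)$ in \eqref{eq:preconditioner} lie in $[0,\,U/(2p_j)]$, so $\|\bfS(\bff)\|\le C_S:=\max_{j}U/(2p_j)$. Because each $\nabla\Phi_i$ is Lipschitz on $\cB$ by (iii) it is continuous, and $\cB$ is compact, so $\sup_{\bff\in\cB}\|\nabla\Phi_i(\bff)\|\le C_\Phi$ for a constant $C_\Phi$ uniform over the $M$ subsets. Submultiplicativity of the operator norm then gives $\|\bfS^{k,i}(\bff)\nabla\Phi_i(\bff)\|\le C_\alpha C_S C_\Phi$ for all $k,i$ and all $\bff\in\cB$, which is the asserted uniform bound.

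For Lipschitz continuity I would first verify that $\bfS(\cdot)$ is itself Lipschitz on $\cB$. Each diagonal entry $S(\bff)_{jj}$ depends only on the coordinate $f_j$ and is a continuous piecewise-linear function of it: the two branches in \eqref{eq:preconditioner} agree at the breakpoint $f_j=U/2$ and have slopes $\pm 1/p_j$, whence $|S(\bff)_{jj}-S(\bff')_{jj}|\le(1/p_j)|f_j-f_j'|$ and so $\|\bfS(\bff)-\bfS(\bff')\|\le C_S'\|\bff-\bff'\|$ with $C_S':=\max_j 1/p_j$. Writing $L_i$ for the Lipschitz constant of $\nabla\Phi_i$ and $L_\Phi:=\max_i L_i$, the standard product inequality $\|AB-A'B'\|\le\|A\|\,\|B-B'\|+\|A-A'\|\,\|B'\|$ applied with $A=\bfS$, $B=\nabla\Phi_i$ and the bounds above shows $\bfS(\cdot)\nabla\Phi_i(\cdot)$ is Lipschitz with constant $C_SL_\Phi+C_S'C_\Phi$, independent of $i$. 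Finally, left-multiplying by $\diag(\bfalpha^{k,i})$ scales this constant by at most $C_\alpha$, giving the uniform Lipschitz constant $C_\alpha(C_SL_\Phi+C_S'C_\Phi)$ for $\bfS^{k,i}(\cdot)\nabla\Phi_i(\cdot)$, valid for every $k\in\bN_0$ and $i\in\bN_M$.

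The individual estimates are routine; the one thing that genuinely needs care is tracking \emph{uniformity} in both indices. Uniformity in $i$ is free because there are only $M$ subsets, so every maximum over $i$ is a maximum of finitely many finite constants; uniformity in $k$ is precisely what condition (v) buys, since convergence of $\bfalpha^{k,i}$ forces the scaling vectors to stay bounded. I therefore expect no real obstacle, only the mild point of checking that $\bfS(\cdot)$ is globally Lipschitz in spite of its piecewise definition, which comes down to the continuity of its two branches at $f_j=U/2$.
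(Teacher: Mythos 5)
Your proof is correct and takes essentially the same route as the paper's: both arguments split the difference $\bfS^{k,i}(\bff)\nabla\Phi_i(\bff)-\bfS^{k,i}(\tilde{\bff})\nabla\Phi_i(\tilde{\bff})$ by the product rule into (bounded preconditioner)$\times$(Lipschitz gradient) plus (Lipschitz preconditioner)$\times$(bounded gradient), extract boundedness of $\bfalpha^{k,i}$ from condition (v), and verify the coordinatewise Lipschitz property of $\bfS(\cdot)$ --- your piecewise-linear, matching-breakpoint argument is the same content as the paper's two-interval case analysis with constant $1/p_j$. The only cosmetic differences are that you factor $\diag(\bfalpha^{k,i})$ out in front rather than carrying it through the case analysis, and that you write out the uniform-boundedness half explicitly, which the paper dismisses as ``similar.''
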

	
	The inclusion of the operator $P_{t}$ in the SDP-BSREM algorithm complicates the convergence proof. Here, we present our approach in dealing with this difficulty. Let $\mathrm{int}\,\cB$ denote the interior of $\cB$. We prove the convergence of SDP-BSREM in two steps. We first prove it with the interior assumption $\tilde{\bff}^{k,i}\in\mathrm{int\,}\cB$ for all $k\in\bN_0, i\in\bN_M$, and then prove it by showing that the interior assumption holds true under certain conditions.\par
	We now proceed the first step. If $\tilde{\bff}^{k,i}\in\mathrm{int\,}\cB$ for all $k\in\bN_0, i\in\bN_M$, then $\bff^{k,i}=P_{t}(\tilde{\bff}^{k,i})=\tilde{\bff}^{k,i}$ and the iteration scheme can be formulated as
	\begin{equation}\label{eq:IterationInterior}
		\bff^{k,i}=\bff^k-\lambda_k\sum_{l=1}^i\bfS^{k,l}(\bff^{k,l-1})\nabla\Phi_t(\bff^{k,l-1}).
	\end{equation}
	
	We first establish a technical lemma. 
	\begin{lemma}\label{lem:difference}
	Suppose conditions (iii) and (v) are satisfied. If $\lim_{k\to\infty}\lambda_k=0,$ and $\tilde{\bff}^{k,i}\in\mathrm{int\,}\cB$, for all $k\in\bN_0$, $i\in\bN_M,$ then
			$\lim_{k\to\infty}(\bff^{k,i}-\bff^k)=0$, for all $i\in\bN_M.$
	\end{lemma}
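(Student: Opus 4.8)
The plan is to exploit the telescoped form of the iteration that becomes available under the interior assumption, combined with the uniform boundedness supplied by Lemma~\ref{lem:LipConPrecond}. Since $\tilde{\bff}^{k,i}\in\mathrm{int\,}\cB$ forces $\bff^{k,i}=P_t(\tilde{\bff}^{k,i})=\tilde{\bff}^{k,i}$, the iterate admits the representation \eqref{eq:IterationInterior}, whence
\begin{equation*}
\bff^{k,i}-\bff^k=-\lambda_k\sum_{l=1}^i\bfS^{k,l}(\bff^{k,l-1})\nabla\Phi_l(\bff^{k,l-1}).
\end{equation*}
This reduces the claim to controlling the norm of the right-hand side as $k\to\infty$.

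First I would invoke Lemma~\ref{lem:LipConPrecond}: under conditions (iii) and (v), there exists a uniform constant $C>0$ such that $\|\bfS^{k,l}(\bff)\nabla\Phi_l(\bff)\|\leqslant C$ for all $k\in\bN_0$, $l\in\bN_M$, and all $\bff\in\cB$. Because every iterate $\bff^{k,l-1}$ lies in $\cB$, the triangle inequality then yields
\begin{equation*}
\|\bff^{k,i}-\bff^k\|\leqslant\lambda_k\sum_{l=1}^i\|\bfS^{k,l}(\bff^{k,l-1})\nabla\Phi_l(\bff^{k,l-1})\|\leqslant\lambda_k\,i\,C\leqslant\lambda_k\,M\,C,
\end{equation*}
where the final step uses $i\leqslant M$. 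Passing to the limit and using the hypothesis $\lim_{k\to\infty}\lambda_k=0$ together with the fact that $MC$ is a fixed constant independent of $k$, the bound gives $\lim_{k\to\infty}\|\bff^{k,i}-\bff^k\|=0$, hence $\lim_{k\to\infty}(\bff^{k,i}-\bff^k)=0$ for every $i\in\bN_M$, as desired.

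I do not anticipate a genuine obstacle here; the argument is essentially a one-line estimate once Lemma~\ref{lem:LipConPrecond} is in hand. The only point requiring care is that the bound $C$ must be independent of \emph{both} the subiteration index $l$ and the outer iteration index $k$, which is exactly why the \emph{uniform} constant furnished by Lemma~\ref{lem:LipConPrecond}, rather than some iterate-dependent bound, is indispensable: it is what allows the factor $MC$ to be extracted from the sum and carried outside the limit. I would also note that, among the relaxation hypotheses, only $\lambda_k\to0$ is used in this lemma; the summability assumptions are not needed at this stage and will instead enter the subsequent convergence analysis.
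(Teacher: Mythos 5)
Your proof is correct and follows exactly the paper's argument: the paper likewise combines the representation \eqref{eq:IterationInterior} (valid under the interior assumption) with the uniform boundedness of $\bfS^{k,i}(\bff)\nabla\Phi_i(\bff)$ from Lemma~\ref{lem:LipConPrecond} and the hypothesis $\lambda_k\to0$; you have simply written out the triangle-inequality estimate that the paper leaves implicit. Your closing remarks about the necessity of a uniform (iterate-independent) constant and about not needing the summability conditions are accurate as well.
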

	\begin{proof}
		By Lemma \ref{lem:LipConPrecond}, $\bfS^{k,i}(\bff)\nabla\Phi_i(\bff)$, $k\in\bN_0,i\in\bN_M$, are uniformly bounded on $\cB$. This combined with $\lim_{k\to\infty}\lambda_k=0$ and \eqref{eq:IterationInterior} yields the desired result.
	\end{proof}
	
	Let $ \bfdelta^{k,i}\coloneqq \bfalpha-\bfalpha^{k,i} , \delta_k\coloneqq\max_{i\in\bN_M,j\in\bN_q}|\delta^{k,i}_j|. $ We state a technical lemma whose proof is included in Appendix \ref{appendix:lemconvergence}.
	\begin{lemma}\label{lem:monotone}
		Suppose conditions (iii)-(vi) are satisfied and $\tilde{\bff}^{k,i}\in\mathrm{int\,}\cB$ for all $ k\in\bN_0,  i\in\bN_M$. If $ f^{k,i-1}_j\in(0,U/2) $ for all $ i\in\bN_M $, then
		\begin{equation}\label{eq:increasing}
			f^{k}_{j}-f^{k+1}_{j}=\lambda_k f^k_j[\frac{\alpha_j}{p_j}\frac{\partial \Phi(\bff^k)}{\partial f_j}+\cO(\delta_{k})+\cO(\lambda_k)].
		\end{equation} 
		If $ f^{k,i-1}_j\in[U/2,U) $ for all $ i\in\bN_M,$ then
		\begin{equation}\label{eq:decreasing}
			f^{k}_{j}-f^{k+1}_{j}=\lambda_k(U-f^k_{j})[\frac{\alpha_j}{p_j}\frac{\partial \Phi(\bff^k)}{\partial f_j}+\cO(\delta_{k})+\cO(\lambda_k)].
		\end{equation}
	\end{lemma}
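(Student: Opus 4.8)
The plan is to expand one full sweep $\bff^k\mapsto\bff^{k+1}$ coordinate-wise under the interior assumption and then linearize each subiteration about $\bff^k$, tracking every error as an order of $\delta_k$ or $\lambda_k$. First I would invoke \eqref{eq:IterationInterior} with $i=M$ together with the diagonality of $\bfS^{k,l}(\bff)=\diag(\bfalpha^{k,l})\bfS(\bff)$ to write
\[
f^k_j-f^{k+1}_j=\lambda_k\sum_{l=1}^M\alpha^{k,l}_j\,S(\fkll)_{jj}\,\frac{\partial\Phi_l(\fkll)}{\partial f_j}.
\]
In the first case the hypothesis $f^{k,l-1}_j\in(0,U/2)$ holds for every $l\in\bN_M$, so by \eqref{eq:preconditioner} the diagonal entry stays in the branch $S(\fkll)_{jj}=f^{k,l-1}_j/p_j$; in the second case $f^{k,l-1}_j\in[U/2,U)$ forces the branch $S(\fkll)_{jj}=(U-f^{k,l-1}_j)/p_j$. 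The purpose of the case hypotheses is precisely to pin the preconditioner to a single branch across all $M$ subiterations, so that the geometric factor ($f_j$ or $U-f_j$) remains consistent throughout the sweep.

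The crux is a uniform estimate on the intra-sweep drift. In the first case I would show, by induction on $l$, that
\[
f^{k,l-1}_j-f^k_j=f^k_j\,\cO(\lambda_k),
\]
with a constant uniform in $k$ and $l\in\bN_M$. The base case $l=1$ is immediate since $\fkll=\bff^k$. For the inductive step I would use the partial-sum expression for $f^{k,l-1}_j-f^k_j$ furnished by \eqref{eq:IterationInterior}, observe that each summand again carries a factor $f^{k,m-1}_j=f^k_j(1+\cO(\lambda_k))$ (exactly because $S_{jj}$ has the form $f_j/p_j$), and bound $\alpha^{k,m}_j$ by condition (v) and $\partial\Phi_m/\partial f_j$ by the continuity of $\nabla\Phi_m$ on the compact set $\cB$. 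This is the step I expect to be the main obstacle: were the preconditioner not carrying the factor $f_j$, the drift would only be $\cO(\lambda_k)$ rather than $f^k_j\,\cO(\lambda_k)$, and the residual terms could not be gathered into the required form $\lambda_k f^k_j[\,\cdots]$.

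With the drift estimate in hand I would linearize. Writing $\alpha^{k,l}_j=\alpha_j-\delta^{k,l}_j$ with $|\delta^{k,l}_j|\le\delta_k$, $f^{k,l-1}_j=f^k_j+f^k_j\,\cO(\lambda_k)$, and, by Lipschitz continuity of $\nabla\Phi_l$ (condition (iii)) combined with $\|\fkll-\bff^k\|=\cO(\lambda_k)$ (from the uniform bound of Lemma \ref{lem:LipConPrecond} applied to \eqref{eq:IterationInterior}), $\partial\Phi_l(\fkll)/\partial f_j=\partial\Phi_l(\bff^k)/\partial f_j+\cO(\lambda_k)$, I would substitute these into the coordinate update and expand the product. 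After cancelling the leading term, every surviving summand carries a factor $f^k_j$ times either $\cO(\delta_k)$ (originating from $\delta^{k,l}_j$) or $\cO(\lambda_k)$ (from the drift and the gradient linearization). Summing over $l$ and using $\sum_{l=1}^M\partial\Phi_l(\bff^k)/\partial f_j=\partial\Phi(\bff^k)/\partial f_j$ then yields \eqref{eq:increasing}.

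The second case is entirely parallel: the hypothesis $f^{k,l-1}_j\in[U/2,U)$ selects the branch $S_{jj}=(U-f_j)/p_j$, the same induction (with $U-f_j$ in the role of $f_j$) gives the drift estimate $U-f^{k,l-1}_j=(U-f^k_j)(1+\cO(\lambda_k))$, and repeating the linearization produces \eqref{eq:decreasing} with $U-f^k_j$ replacing $f^k_j$.
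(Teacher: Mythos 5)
Your proposal is correct and follows essentially the same route as the paper's proof: both arguments pin the preconditioner to a single branch via the case hypotheses, exploit the multiplicative form of the per-coordinate update (your drift induction $f^{k,l-1}_j=f^k_j(1+\cO(\lambda_k))$ is exactly the content of the paper's product expansion $f^{k+1}_j=f^k_j\prod_{i=1}^M(1-\lambda_k b^{k,i}_j/p_j)$), and use Lipschitz continuity of $\nabla\Phi_i$ together with condition (v) to collect the $\cO(\lambda_k)$ and $\cO(\delta_k)$ errors before summing the subset gradients into $\nabla\Phi(\bff^k)$. The only difference is organizational—the paper expands the product at the end, while you substitute the drift estimate into the telescoped sum—so the mathematical content coincides.
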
\par
	We recall that a cluster point of a sequence $ \bff^k $ is defined as the limit of a convergent subsequence of $\bff^k$ and state a lemma whose proof is included in Appendix \ref{appendix:lemconvergence}.
	\begin{lemma}\label{lem:convergence}
		 If conditions (i)-(vi) are satisfied and $\tilde{\bff}^{k,i}\in\mathrm{int\,}\cB$ for all $ k\in\bN_0, i\in\bN_M,$ then (a) $\Phi(\bff^k)$ converges in $\bR$, (b) there exists a cluster point $\bff^*$ of $ \bff^k$ with $\bfS(\bff^*)\nabla\Phi(\bff^*)={\bf0}$, and (c) such a cluster point $\bff^*$ described in (b) is a global minimizer of $\Phi$ over $\cB$.
	\end{lemma}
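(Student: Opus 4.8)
The plan is to turn the iteration into a one-step descent inequality for $\Phi$, extract from it a stationary cluster point, and finally upgrade stationarity to global optimality via convexity together with the coordinatewise estimates of Lemma~\ref{lem:monotone}. First I would prove (a). Under the interior assumption the iteration reduces to \eqref{eq:IterationInterior}, so $\fkk-\fk=-\lambda_k\sum_{l=1}^M\bfS^{k,l}(\bff^{k,l-1})\nabla\Phi_l(\bff^{k,l-1})$, which by Lemma~\ref{lem:LipConPrecond} has norm $\cO(\lambda_k)$. Since $\nabla\Phi=\sum_i\nabla\Phi_i$ is Lipschitz on the compact set $\cB$ by condition (iii) (with constant $L$, say), the descent lemma gives
\[
\Phi(\fkk)\le\Phi(\fk)+\langle\nabla\Phi(\fk),\fkk-\fk\rangle+\tfrac{L}{2}\|\fkk-\fk\|^2,
\]
with quadratic term $\cO(\lambda_k^2)$. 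To handle the linear term I would insert the identities \eqref{eq:increasing}--\eqref{eq:decreasing}: writing $f^k_j-f^{k+1}_j$ through them and using that the scaling factor ($f^k_j$ or $U-f^k_j$) equals $p_j\,S(\fk)_{jj}$, the linear term collapses to $-\lambda_k\langle\diag(\bfalpha)\bfS(\fk)\nabla\Phi(\fk),\nabla\Phi(\fk)\rangle$ up to errors $\cO(\lambda_k\delta_k)+\cO(\lambda_k^2)$. Setting $Q_k:=\langle\diag(\bfalpha)\bfS(\fk)\nabla\Phi(\fk),\nabla\Phi(\fk)\rangle\ge0$ (nonnegative because $\alpha_j>0$ and $S(\fk)_{jj}\ge0$), this yields $\Phi(\fkk)\le\Phi(\fk)-\lambda_k Q_k+\cO(\lambda_k\delta_k)+\cO(\lambda_k^2)$. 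Condition (iv) makes the $\cO(\lambda_k^2)$ terms summable and condition (vi) controls the aggregated $\bfdelta^{k,i}$-contributions; since $\Phi$ is bounded below on $\cB$, a standard quasi-monotone (Robbins--Siegmund type) sequence argument then gives that $\Phi(\fk)$ converges and that $\sum_k\lambda_k Q_k<\infty$.

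For (b) I would combine $\sum_k\lambda_k Q_k<\infty$ with $\sum_k\lambda_k=\infty$ (condition (iv)) to get $\liminf_k Q_k=0$, so some subsequence has $Q_k\to0$; by compactness of $\cB$ I refine it to $\bff^{k_m}\to\bff^*\in\cB$. Continuity of $\bfS$ and $\nabla\Phi$ forces $\sum_j\alpha_j S(\bff^*)_{jj}\bigl(\partial\Phi(\bff^*)/\partial f_j\bigr)^2=0$, and since every summand is nonnegative, each factor $S(\bff^*)_{jj}\,\partial\Phi(\bff^*)/\partial f_j$ vanishes, i.e.\ $\bfS(\bff^*)\nabla\Phi(\bff^*)={\bf0}$.

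Finally (c): by convexity (condition (ii)) it suffices to verify the variational inequality $\langle\nabla\Phi(\bff^*),\bff-\bff^*\rangle\ge0$ for all $\bff\in\cB$, equivalently the coordinatewise KKT sign conditions. For each coordinate with $0<f^*_j<U$ the entry $S(\bff^*)_{jj}$ is strictly positive, so $\bfS(\bff^*)\nabla\Phi(\bff^*)={\bf0}$ forces $\partial\Phi(\bff^*)/\partial f_j=0$. The delicate case is a boundary coordinate $f^*_j\in\{0,U\}$, where $S(\bff^*)_{jj}=0$ and stationarity is vacuous; here I would argue by contradiction with Lemma~\ref{lem:monotone}. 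If, say, $f^*_j=0$ but $\partial\Phi(\bff^*)/\partial f_j<0$, then for large $m$ the bracket in \eqref{eq:increasing} is negative---Lemma~\ref{lem:difference} guarantees the $f^{k,i-1}_j$ all lie in $(0,U/2)$ near the limit and the $\cO(\delta_k),\cO(\lambda_k)$ errors vanish---so $f_j$ is pushed persistently upward, contradicting $f^{k_m}_j\to0$; the case $f^*_j=U$ is symmetric via \eqref{eq:decreasing}. This establishes the sign conditions, hence $\bff^*$ solves the variational inequality and, by convexity, is a global minimizer of $\Phi$ over $\cB$. I expect this boundary analysis---reconciling the vanishing of the EM preconditioner on the faces of the box with the KKT conditions---to be the main obstacle, alongside the careful bookkeeping in (a) that makes the $\lambda_k$-weighted $\bfdelta^{k,i}$-errors summable under condition (vi).
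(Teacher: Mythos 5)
Your overall skeleton (descent inequality $\Rightarrow$ $\sum_k\lambda_k Q_k<\infty$ $\Rightarrow$ $\liminf_k Q_k=0$ $\Rightarrow$ stationary cluster point $\Rightarrow$ coordinatewise KKT conditions) is the same as the paper's, and your parts (a) and (b) are essentially the paper's argument. But part (c) has a genuine gap at exactly the point where the paper has to work hardest. From $f^*_j=0$ and $r_j:=\partial\Phi(\bff^*)/\partial f_j<0$ you conclude via \eqref{eq:increasing} that $f_j$ is ``pushed persistently upward,'' contradicting $f^{k_m}_j\to0$. The sign of the bracket in \eqref{eq:increasing}, however, is only controlled while the iterate lies in a neighborhood $\cB_\delta:=\{\bff\in\cB:\|\bff-\bff^*\|_2<\delta\}$ on which $\partial\Phi/\partial f_j$ remains negative (and on which, via Lemma~\ref{lem:difference}, the subiterates stay in $(0,U/2)$). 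Only the \emph{subsequence} $\bff^{k_m}$ is known to converge to $\bff^*$; the full sequence may leave $\cB_\delta$ infinitely often, and while it is outside, $f^k_j$ is free to decrease back toward $0$ before the next return. Upward motion inside the ball is therefore perfectly compatible with $f^{k_m}_j\to 0$, provided there are infinitely many excursions, and no contradiction follows. The paper closes precisely this hole: it introduces the last exit times $t_n$ before each subsequence index; in the ``eventually stays inside'' case the monotone increase does give the contradiction you want, but in the ``infinitely many exits'' case it must extract a \emph{second} cluster point $\bff^{**}\neq\bff^*$ lying on the face $\cD=\{\bff\in\cB: f_{j'}=0,\ j'\in\cJ_1,\ f_{j''}=U,\ j''\in\cJ_2\}$, use part (a) to conclude $\Phi(\bff^{**})=\Phi(\bff^*)$, show via the variational inequality that $\bff^*$ minimizes $\Phi$ over $\cD$, and then contradict the uniqueness hypothesis (i). None of this machinery appears in your proposal, and without it the boundary case does not close.

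A secondary problem is in part (a): you obtain the descent inequality by inserting \eqref{eq:increasing}--\eqref{eq:decreasing} into the linear term, but those identities are conditional. Lemma~\ref{lem:monotone} requires $f^{k,i-1}_j\in(0,U/2)$ for \emph{all} $i\in\bN_M$, or $f^{k,i-1}_j\in[U/2,U)$ for \emph{all} $i\in\bN_M$; a coordinate that crosses $U/2$ in the middle of a sweep satisfies neither hypothesis, so for such $k,j$ you have no identity to substitute. The paper avoids Lemma~\ref{lem:monotone} entirely in part (a): starting from \eqref{eq:IterationInterior}, it replaces each $\bfS^{k,i}(\bff^{k,i-1})\nabla\Phi_i(\bff^{k,i-1})$ by $\bfS^{k,i}(\bff^k)\nabla\Phi_i(\bff^k)$ at cost $\cO(\lambda_k)$ using the uniform Lipschitz bound of Lemma~\ref{lem:LipConPrecond}, and then splits $\bfS^{k,i}(\bff^k)=\diag(\bfalpha-\bfdelta^{k,i})\bfS(\bff^k)$ to isolate the term $-\lambda_k J_k$ plus errors $\cO(\lambda_k\delta_k)+\cO(\lambda_k^2)$. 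Repairing your part (a) along these lines is routine; repairing part (c) requires the excursion analysis described above, which is the substantive content missing from the proposal.
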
\par

	We are ready to prove convergence of $ \bff^k $ and $\bff^{k,i}$ with the interior assumption. 
	
	\begin{prop}\label{prop:convergence}
		If conditions (i)-(vi) are satisfied and $\tilde{\bff}^{k,i}\in\mathrm{int}\,\cB~\text{for~all~} k\in\bN_0,i\in\bN_M,$ then both
		$\bff^k$ and $\bff^{k,i}$ converge to the global minimizer of $\Phi$ on $\cB$.
	\end{prop}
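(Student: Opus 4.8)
The plan is to leverage the cluster-point characterization already furnished by Lemma~\ref{lem:convergence} and promote it to convergence of the entire sequence, using the uniqueness hypothesis in condition~(i) together with the compactness of $\cB$. First I would fix notation: by condition~(i), $\Phi$ has a unique global minimizer on $\cB$, which I denote by $\bff^*$, and by Lemma~\ref{lem:convergence}(b)--(c) some cluster point of $\bff^k$ is a global minimizer, so by uniqueness that cluster point is exactly $\bff^*$. Since $P_{t}$ maps into $\cB$, every iterate satisfies $\bff^k\in\cB$, so the whole sequence lives in the compact box $\cB$ and therefore possesses cluster points; the task is to show there is only one.

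The core of the argument is to pin down the limiting objective value and then show that \emph{every} cluster point equals $\bff^*$. From Lemma~\ref{lem:convergence}(a), $\Phi(\bff^k)$ converges to some $L\in\bR$; evaluating along the subsequence converging to $\bff^*$ and invoking continuity of $\Phi$ (condition~(ii)) gives $L=\Phi(\bff^*)=\min_{\cB}\Phi$. Next, let $\bar{\bff}$ be an arbitrary cluster point, the limit of a subsequence $\bff^{k_m}$. Continuity yields $\Phi(\bff^{k_m})\to\Phi(\bar{\bff})$, while $\Phi(\bff^k)\to L$ forces $\Phi(\bar{\bff})=L=\min_{\cB}\Phi$. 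Hence $\bar{\bff}\in\cB$ attains the minimum and is a global minimizer, so condition~(i) collapses it to $\bar{\bff}=\bff^*$. Thus $\bff^*$ is the unique cluster point of a sequence contained in the compact set $\cB$, and the standard fact that a sequence in a compact set with a single cluster point converges to it delivers $\bff^k\to\bff^*$.

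For the subiterates, I would observe that condition~(iv) gives $\sum_{k=0}^\infty\lambda_k^2<\infty$, hence $\lim_{k\to\infty}\lambda_k=0$, so Lemma~\ref{lem:difference} applies and yields $\bff^{k,i}-\bff^k\to{\bf0}$ for each $i\in\bN_M$. Writing $\bff^{k,i}=\bff^k+(\bff^{k,i}-\bff^k)$ and combining with $\bff^k\to\bff^*$ then gives $\bff^{k,i}\to\bff^*$ for every $i\in\bN_M$, completing the proof.

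The main obstacle is precisely the upgrade from ``some cluster point is a minimizer'' to ``the full sequence converges,'' since Lemma~\ref{lem:convergence} only asserts the existence of one good cluster point. The key realization that dissolves this difficulty is that convergence of the scalar sequence $\Phi(\bff^k)$ forces every cluster point to share the same objective value, which must therefore be the minimum value; uniqueness of the minimizer then identifies all cluster points with the single point $\bff^*$, after which compactness of $\cB$ finishes the job. Without either the monotone-type convergence of $\Phi(\bff^k)$ or the uniqueness in condition~(i), one could only guarantee subsequential convergence, so both ingredients are genuinely needed here.
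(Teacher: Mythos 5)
Your proposal is correct and follows essentially the same route as the paper's proof: both use Lemma~\ref{lem:convergence}(a) and (c) together with the uniqueness hypothesis (i) to force every cluster point of $\bff^k$ to coincide with the minimizer $\bff^*$, then invoke compactness of $\cB$ to get convergence of the full sequence, and finally apply Lemma~\ref{lem:difference} to transfer convergence to the subiterates $\bff^{k,i}$. The only differences are cosmetic: the paper argues by contradiction with a second cluster point while you argue directly, and you explicitly note that condition (iv) yields $\lambda_k\to 0$ so that Lemma~\ref{lem:difference} applies, a detail the paper leaves implicit.
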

	\begin{proof} According to Lemma \ref{lem:convergence} (c), $\bff^*$ is a global minimizer of $\Phi$ over $\cB.$ Suppose there exists another cluster point $\bff^{**}\neq\bff^*$. By Lemma \ref{lem:convergence} (a), $\Phi(\bff^k)$ converges in $\bR,$ which implies that $\Phi(\bff^*)=\Phi(\bff^{**}).$ Then $\bff^{**}$ is also a minimizer of $\Phi(\bff),$ which is a contradiction since $\Phi(\bff)$ has a unique minimizer on $\cB.$ Then we obtain $\lim_{k\to\infty}\bff^{k}=\argmin_{\bff\in\cB}\Phi(\bff).$ The convergence of $\bff^{k,i}$ follows from Lemma \ref{lem:difference} and the convergence of $\bff^k$.
	\end{proof}
	
	We have shown that condition $ \tilde{\bff}^{k,i}\in\mathrm{int}\,\cB $ for all $ k\in\bN_0,i\in\bN_M $ is sufficient for the convergence of SDP-BSREM algorithm. Next, we prove the convergence of SDP-BSREM without the interior assumption. The proof of convergence is completed by proving $\tilde{\bff}^{k,i}\in\mathrm{int}\,\cB$ for all $i\in\bN_M$ and $k > K$ for some $K>0.$ We now state a lemma to prove it.

	\begin{lemma}\label{lemma:interior}
		Suppose condition (iii) is satisfied. If $ \bfalpha^{k,i} $ is bounded and $ \lim_{k\to\infty}\lambda_k=0, $ then $\tilde{\bff}^{k,i}\in\mathrm{int}\,\cB $ for all $i\in\bN_M$ and $k>K$ for some $K>0$.
	\end{lemma}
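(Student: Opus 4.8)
The plan is to exploit the fact, built into the definition \eqref{eq:preconditioner} of $\bfS$, that its diagonal entries vanish linearly as $\bff$ approaches the boundary of $\cB$. First I would record that every point at which the algorithm evaluates its data already lies in the open box. Since $0<t<U$, all three branches of $P_t$ in \eqref{eq:OperatorPt} return values in $(0,U)$, so $P_t$ maps $\bR^q$ into $(0,U)^q=\mathrm{int}\,\cB$; hence $\bff^{k,i-1}\in(0,U)^q$ for every $i\in\bN_M$ and every $k\ge 1$ (for $i\ge 2$ it equals $P_t(\tilde{\bff}^{k,i-1})$, and for $i=1$ it is $\bff^{k,0}=\fk=P_t(\tilde{\bff}^{k-1,M})$). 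Next I would collect three uniform bounds, all independent of $k$ and $i$: the vector $\bfalpha^{k,i}$ is bounded by hypothesis, say $\alpha^{k,i}_j\le A$; the numbers $p_j$ are fixed and positive, so $p_j\ge p_{\min}>0$; and condition (iii) makes each $\nabla\Phi_i$ continuous, hence bounded on the compact set $\cB$, say $|\partial\Phi_i(\bff)/\partial f_j|\le G$ there. Setting $D\coloneqq AG/p_{\min}$ and using $\lambda_k\to 0$, I would fix $K>0$ so that $\lambda_k D<1$ for all $k>K$.

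The core computation verifies componentwise that $0<\tilde f^{k,i}_j<U$ for $k>K$, treating the two branches of $\bfS$ separately and factoring out the vanishing entry. If $0<f^{k,i-1}_j<U/2$, then $S(\bff^{k,i-1})_{jj}=f^{k,i-1}_j/p_j$, which yields the multiplicative form $\tilde f^{k,i}_j=f^{k,i-1}_j\bigl(1-\lambda_k\alpha^{k,i}_j p_j^{-1}\tfrac{\partial\Phi_i(\bff^{k,i-1})}{\partial f_j}\bigr)$; the parenthetical factor lies in $(1-\lambda_k D,\,1+\lambda_k D)\subset(0,2)$, so the product is positive and at most $f^{k,i-1}_j\cdot 2<(U/2)\cdot 2=U$. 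Symmetrically, if $U/2\le f^{k,i-1}_j<U$, then $S(\bff^{k,i-1})_{jj}=(U-f^{k,i-1}_j)/p_j$ and the same manipulation applied to $U-\tilde f^{k,i}_j$ gives $U-\tilde f^{k,i}_j=(U-f^{k,i-1}_j)\bigl(1+\lambda_k\alpha^{k,i}_j p_j^{-1}\tfrac{\partial\Phi_i(\bff^{k,i-1})}{\partial f_j}\bigr)$, whose factor again lies in $(0,2)$: positivity forces $\tilde f^{k,i}_j<U$, and the bound $U-\tilde f^{k,i}_j<(U/2)\cdot 2=U$ forces $\tilde f^{k,i}_j>0$. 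Since there are only finitely many $i\in\bN_M$, a single $K$ works uniformly, giving $\tilde{\bff}^{k,i}\in\mathrm{int}\,\cB$ for all $i\in\bN_M$ and $k>K$.

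The hard part is recognizing \emph{why} a crude estimate fails and why the structure of $\bfS$ is indispensable. It is tempting to bound the increment $\lambda_k\bfS^{k,i}(\bff^{k,i-1})\nabla\Phi_i(\bff^{k,i-1})$ uniformly, note it tends to $0$, and conclude that $\tilde{\bff}^{k,i}$ stays near $\bff^{k,i-1}\in(0,U)^q$; but $P_t$ guarantees only membership in the \emph{open} box, so a component of $\bff^{k,i-1}$ may sit arbitrarily close to $0$ or $U$, and a uniformly small nonzero step could still cross a face. The resolution is precisely the boundary vanishing of $S(\bff)_{jj}$, which makes the $j$-th step proportional to the distance from $f^{k,i-1}_j$ to its nearest face and thereby converts the additive update into a multiplication of that distance by a factor that stays strictly positive once $\lambda_k D<1$; the sign of the distance, and hence interiority, is preserved. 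I would also be careful to invoke only the \emph{boundedness} of $\bfalpha^{k,i}$ here (not its convergence), so that the argument does not secretly lean on conditions (v)-(vi); in particular I would derive the bound $D$ directly from condition (iii) rather than from Lemma \ref{lem:LipConPrecond}, which assumes (v).
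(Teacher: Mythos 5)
Your proof is correct and follows essentially the same route as the paper's: bound $|\alpha^{k,i}_j/p_j\,(\partial/\partial f_j)\Phi_i|$ uniformly via condition (iii) and the boundedness of $\bfalpha^{k,i}$, choose $K$ so that $\lambda_k$ times this bound is below $1$, and then use the multiplicative form of the update in each branch of $\bfS$ to keep $\tilde f^{k,i}_j$ in $(0,U)$. Your write-up merely makes explicit the steps the paper compresses into ``from which we can show that $\tilde{f}^{k,i}_j\in(0,U)$,'' namely the factor lying in $(0,2)$ and the bounds $2\cdot(U/2)=U$ on each side.
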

	\begin{proof}
		It suffices to prove that $ \tilde{f}^{k,i}_j \in(0,U) $ for all $ i\in\bN_M, j\in\bN_q, k>K, $ for some $ K>0 $. By condition (iii), $(\partial/\partial f_j)\Phi_i(\bff)$ is bounded over $ \cB $ for all $i\in\bN_M, j\in\bN_q.$  Combining this with the boundedness of $\bfalpha^{k,i}$,  there exists $c_1>0$ such that $|\alpha^{k,i}_j/{p_j}(\partial/\partial f_j)\Phi_i(\bff)|\leqslant c_1$, for all $i\in\bN_M,j\in\bN_q,k\in\bN_0$, and all $\bff\in\cB.$ Because $ \lim_{k\to\infty}\lambda_k=0$, there exists $K>0$ such that $|\lambda_k|<1/c_1 $ for all $k>K$, so that $|\lambda_k\alpha^{k,i}_j/p_j(\partial/\partial f_j)\Phi_i(\bff^{k,i-1})|<1$. Hence, for $k>K$, $i\in\bN_M$, if $f^{k,i-1}_j\in(0,U/2),$ the preconditioner  $S^{k,i}(\bff^{k,i-1})_{jj}=\alpha^{k,i}_jf^{k,i-1}_j/p_j$ gives rise to 
		$
		\tilde{f}^{k,i}_j=f^{k,i-1}_j[1-\lambda_k\alpha^{k,i}_j/p_j(\partial/\partial f_j)\Phi_i(\bff^{k,i-1})]$, from which we can show that $\tilde{f}^{k,i}_j\in(0,U)$. 
		Likewise, if $ f^{k,i-1}_j\in[U/2,U),$ the preconditioner  $S(\bff^{k,i-1})_{jj}=\alpha^{k,i}_j(U-f^{k,i-1}_j)/p_j$ gives that $ (U-\tilde{f}^{k,i}_j)=(U-f^{k,i-1}_j)[1+\lambda_k\alpha^{k,i}_j/p_j(\partial/\partial f_j)\Phi_i(\bff^{k,i-1})] $, from which we can show that $\tilde{f}^{k,i}_j\in(0,U).$
	\end{proof}
	
	We now arrive at the following theorem for the convergence of SDP-BSREM algorithm without an interior assumption.
	\begin{thm}\label{thm:convergenceSDP}
		If conditions (i)-(vi) are satisfied, then $\bff^{k,i}$ converges to the global minimizer of $ \Phi $ on $ \cB. $
	\end{thm}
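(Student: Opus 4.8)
The plan is to reduce the general statement to the interior case already settled in Proposition~\ref{prop:convergence}. That proposition delivers convergence of both $\bff^k$ and $\bff^{k,i}$ to the global minimizer \emph{whenever} the interior assumption $\tilde{\bff}^{k,i}\in\mathrm{int}\,\cB$ holds for all $k\in\bN_0$, while Lemma~\ref{lemma:interior} secures that assumption only for $k>K$. Thus the entire remaining work splits into two pieces: verifying the hypotheses of Lemma~\ref{lemma:interior}, and then absorbing the finitely many early iterates through a tail/reindexing argument so that Proposition~\ref{prop:convergence} can be applied.

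First I would check the two hypotheses of Lemma~\ref{lemma:interior}. Boundedness of $\bfalpha^{k,i}$ is immediate from condition (v), since a sequence converging to $\bfalpha$ is bounded. The requirement $\lim_{k\to\infty}\lambda_k=0$ follows from the square-summability half of condition (iv): $\sum_{k=0}^\infty\lambda_k^2<\infty$ forces $\lambda_k^2\to0$, hence $\lambda_k\to0$. With condition (iii) in force, Lemma~\ref{lemma:interior} then supplies some $K>0$ with $\tilde{\bff}^{k,i}\in\mathrm{int}\,\cB$ for all $i\in\bN_M$ and all $k>K$; consequently $P_{t}$ acts as the identity on these iterates and the tail of the sequence obeys the interior iteration scheme~\eqref{eq:IterationInterior}.

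Next I would reindex. I propose to treat the tail $\{\bff^{K+1+k}\}_{k\geq0}$ as a fresh run of SDP-BSREM with initial point $\bff^{K+1}\in\cB$, relaxation sequence $\lambda_{K+1+k}$, and preconditioner parameters $\bfalpha^{K+1+k,i}$. For this shifted problem the interior assumption holds for \emph{all} indices $k\geq0$, so I only need conditions (i)--(vi) to survive the shift. Conditions (i)--(iii) and (v) are intrinsic to $\Phi$ and to the limit $\bfalpha$, hence unaffected; conditions (iv) and (vi) are stable under deletion of finitely many initial terms, since $\sum_{k>K}\lambda_k=\infty$, $\sum_{k>K}\lambda_k^2<\infty$, and $\sum_{k>K}\lambda_k(\bfalpha-\bfalpha^{k,i})$ still converges. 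Proposition~\ref{prop:convergence} applied to the shifted sequence then yields convergence of $\bff^{K+1+k}$ and $\bff^{K+1+k,i}$ to the global minimizer of $\Phi$ on $\cB$; because a sequence converges if and only if one of its tails does, the original $\bff^{k,i}$ converges to that same (by condition (i), unique) minimizer.

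The main obstacle is conceptual rather than computational: making the tail argument rigorous. One must state explicitly that the portion of the iteration after index $K$ is genuinely an instance of the same algorithm, so that Proposition~\ref{prop:convergence} applies verbatim, and that the asymptotic conditions (iv) and (vi)---the only nontrivial ones---are preserved under truncation. Everything else is bookkeeping, as the analytic heavy lifting has already been placed in Lemmas~\ref{lem:LipConPrecond}--\ref{lem:convergence} and in Proposition~\ref{prop:convergence}.
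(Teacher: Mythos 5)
Your proposal is correct and takes essentially the same route as the paper: deduce $\lambda_k\to0$ from the square-summability in condition (iv) and boundedness of $\bfalpha^{k,i}$ from condition (v), invoke Lemma~\ref{lemma:interior} to get $\tilde{\bff}^{k,i}\in\mathrm{int}\,\cB$ for $k>K$, and conclude via Proposition~\ref{prop:convergence}. The only difference is that you make explicit the tail/reindexing step (checking that conditions (iv) and (vi) survive deletion of finitely many terms), which the paper compresses into the single sentence ``Then the proof follows from Proposition~\ref{prop:convergence}.''
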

	\begin{proof}
		We have that $ \lim_{k\to\infty}\lambda_k=0 $ and $ \bfalpha^{k,i} $ is bounded from conditions (iv) and (v) respectively. Thus, by Lemma \ref{lemma:interior}, there exists $ K>0 $ such that $\tilde{\bff}^{k,i}\in\mathrm{int}\,\cB$ for all $k>K, i\in\bN_M$. Then the proof follows from Proposition \ref{prop:convergence}.
	\end{proof}
	
	We next propose four specific SDPs which satisfy the convergence conditions (iv)-(vi). For $\bfS^{k,i}(\bff)=\diag(\bfalpha^{k,i})\bfS(\bff)$, let $ \bfalpha^{k,i}\coloneqq \alpha_{k,i}\bfnu^{k,i} $, where $\alpha_{k,i}$ is a scalar sequence and $\bfnu^{k,i}$ is a vector sequence to be determined. Other, potentially better, choices of $ \bfalpha^{k,i} $ are left as future work. 
	
	In this case, inspired by momentum techniques \cite{Nesterov1983Method,lin2019krasnoselskii}, we consider the following two choices of $ \alpha_{k,i} $.
	The first one is derived from Nesterov momentum \cite{Nesterov1983Method}:
	\begin{equation}\label{preconditioner:Nesterov}
	\alpha_{k,i}\!\coloneqq 1+(t_{k,i}-1)/t_{k,i+1},
	\end{equation} 
	where $ t_{k,i+1}\!\coloneqq (1+\sqrt{1+4t_{k,i}^2})/2, $ $t_{0,1}\coloneqq 1$ and $ t_{k+1,1}\coloneqq t_{k,M+	1}, k\in\bN_0, i\in\bN_M. $ 
	The second one has the following form:
	\begin{equation}\label{preconditioner:KM}
	\alpha_{k,i}\coloneqq   (\varrho(kM+i-1)+\delta_2)/(kM+i-1+\delta_1), 
	\end{equation}
	for $ k\in\bN_0,i\in\bN_M, $ where $ \varrho,\delta_1 $ and $ \delta_2 $ are positive parameters. We notice that this $ \alpha_{k,i} $ is an extension of the momentum proposed in \cite{lin2019krasnoselskii}.\par
	
	The motivation for the design of $\bfnu^{k,i}$  is presented as follows. The use of different step-sizes for different regions in the image can accelerate the convergence of the algorithm.  The diagonal nonnegative definite preconditioner plays an important role in rescaling the step-sizes of the algorithm. Hence, a good preconditioner can significantly accelerate the convergence of the algorithm.  We propose a type of preconditioner that is related to the regularization term that promotes smoothness in the reconstructed image. Our goal is to find the minimizer of the objective function which consists of the fidelity term and the regularization term. The fidelity term estimates the fitting quality of the reconstructed image to the data and the regularization term defined in \eqref{eq:RDP} promotes smoothness in the reconstructed image. Smaller fidelity term makes the reconstructed image more consistent with the data and smaller regularization term leads to a smoother reconstructed image.

	Suppose $ \bff^* $ is a minimizer of the objective function $ \Phi $ and the iteration scheme of the algorithm converges to $ \bff^* $. Let $ I_s $ and $ I_v $ be the smooth and variable areas of $ \bff^* $, respectively. Suppose $ \nabla_{I_s}\Phi $ and $ \nabla_{I_v}\Phi $ are the subsets of $ \nabla\Phi $ defined in the areas $ I_s$ and $ I_v $, respectively. Then for the smooth areas $ I_s $, the descent directions of the fidelity term and the regularization term are consistent whereas for the variable areas $ I_v $, the descent directions of the fidelity term and the regularization term are inconsistent. Thus the direction of $ \nabla_{I_s}\Phi $ more accurately points toward the minimizer than the direction of $ \nabla_{I_v}\Phi. $ Therefore, we conclude that the directions of $ \nabla_{I_s}\Phi(\bff^{k,i}) $ and $ \nabla_{I_s}\Phi(\bff^{k,i+1}) $ are more consistent than the directions of $ \nabla_{I_v}\Phi(\bff^{k,i}) $ and $ \nabla_{I_v}\Phi(\bff^{k,i+1}). $ We illustrate this conjecture (the subset gradient is used in practice) via numerical experiments (see Fig. \ref{fig:Angle}). The preconditioner is designed to achieve larger iteration step-sizes in the smooth areas $ I_s $ and smaller iteration step-sizes in the variable areas $ I_v. $ The numerical gradient (the \emph{gradient} function in Matlab) of $ \bff^* $ is applied to measure the smoothness degree of the image $ \bff^* $. Then larger and smaller step-sizes will be used in the areas having smaller and larger numerical gradients, respectively. 
	
	Suppose $ \bff\in\bR^{q_1q_2\times 1}_+ $ is a 2D image with size $ q_1\times q_2 $. Let $ \mathrm{mat}(\bff)\in\bR^{q_1\times q_2}_+ $ be the matrix form of $ \bff $. Using the \emph{gradient} function in Matlab, we compute the gradients of $ \mathrm{mat}(\bff) $ along the $ x $ and $ y $ directions, namely, $ \mathrm{grad}_x(\mathrm{mat}(\bff)) $ and $ \mathrm{grad}_y(\mathrm{mat}(\bff)) $. Let 
	$
	\mathrm{grad}(\bff)\coloneqq \sqrt{(\mathrm{grad}_x(\mathrm{mat}(\bff)))^2+(\mathrm{grad}_y(\mathrm{mat}(\bff)))^2},
	$
	where the square and square root operations are element-wise. For PET patients data, the minimizer $ \bff^*$ is unknown and we use $ \bff^{k,i} $ to approximate $ \bff^*. $  Based on the consideration that areas with larger numerical gradients should have smaller step-sizes, we first let 
	$
	\bfmu^{k,i}\coloneqq \mathrm{max} (0.01,\mathrm{grad}(\bff^{k,i})/\mathrm{mean}(\bff^{k,i})),
	$
	where $ \mathrm{mean}(\bff)\coloneqq \sum_{j=1}^{q}f_j/q $ is used to normalize the $\bff^{k,i}$.  Instead of directly letting $\bfnu^{k,i}$ be the $1/\bfmu^{k,i}$, we define a projection operator to avoid too large or too small step-sizes. For two positive numbers $ \nu_m<\nu_M, $ and $ \bff\in\bR^q_+, $ a projection operator
	$ P_{\nu_m}^{\nu_M} : \bR^q\to\bR^q $ is defined by $ P_{\nu_m}^{\nu_M}(\bff)_j\coloneqq \min\left\lbrace \nu_M, \max\left\lbrace f_j, \nu_m\right\rbrace\right\rbrace $.

	Let $ J_{k,i}\coloneqq  kM+i, k\in\bN_0, i\in\bN_M $. For $ 0<\nu_1<\nu_2,$ and $ 0\leqslant J_0\leqslant J_1\coloneqq  k_1M+i_1,$ the $\bfnu^{k,i}$ is determined by 
	
	\begin{equation}\label{eq:nuki}
	\bfnu^{k,i}\coloneqq \begin{dcases}
	{\bf1}_q & \text{if~} J_{k,i}\leqslant J_0,\\
	P_{\nu_1}^{\nu_2}(\mathrm{mean}(\bfmu^{k,i})/\bfmu^{k,i}) & \text{if~} J_0<J_{k,i}\leqslant J_1,\\
	\bfnu^{k_1,i_1} &\text{if~} J_{k,i}>J_1.
	\end{dcases}
	\end{equation}
	
	For the first $J_0$ subiterations, the $\bfnu^{k,i}$ is set to the identity vector since the approximation of $\bff^{k,i}$ to $\bff^*$ is poor. The approximation becomes better as the iteration continues, hence different step-sizes for different regions of the image are used for $J_0<J_{k,i}\leqslant J_1$. After $J_1$-th subiteration $\bfnu^{k,i}$ is then fixed due to improved approximation. 
	The preconditioners $\bfS^{k,i}(\bff)=\bfS(\bff)\diag{(\alpha_{k,i}\bfnu^{k,i})}$ are denoted  by P1 and P2 depending on $\alpha_{k,i}$ defined in \eqref{preconditioner:Nesterov} and \eqref{preconditioner:KM} respectively. The momentum-like preconditioners $\bfS^{k,i}(\bff)=\bfS(\bff)\diag{(\alpha_{k,i}{\bf1}_q)}$ are denoted by M1 and M2 depending on $\alpha_{k,i}$ defined in \eqref{preconditioner:Nesterov} and $\eqref{preconditioner:KM}$ respectively. Then we have the following theorem. The proof can be found in the appendix \ref{appendix:lemconvergence}.
	\begin{thm}\label{thm:SpecificSDPs}
		The SDP-BSREM algorithm with $ \Phi $ defined in \eqref{eq:ObjectiveFunction}, and with relaxation $ \lambda_k\coloneqq \lambda_0/(ak+1), \lambda_0, a>0, $ and preconditioner P1, P2, M1, or M2 is convergent.
	\end{thm}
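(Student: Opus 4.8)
The plan is to invoke Theorem \ref{thm:convergenceSDP}, which reduces the entire task to checking that conditions (i)--(vi) hold for each of the four preconditioners P1, P2, M1, M2 under the prescribed relaxation $\lambda_k=\lambda_0/(ak+1)$. Conditions (i)--(iii) concern only the objective $\Phi$ and are independent of the preconditioner choice: strict convexity of $\Phi$ (established in Appendix \ref{appendix:ConvexAndGradient} under the standing assumption that $\bfA^\top\bfg$ is nonzero) furnishes the unique minimizer on $\cB$ required by (i), while convexity, twice differentiability, and the Lipschitz continuity of each $\nabla\Phi_i$ on the bounded set $\cB$ supply (ii) and (iii). For condition (iv), I would note that $\lambda_k=\lambda_0/(ak+1)$ is comparable to $(\lambda_0/a)k^{-1}$, so $\sum_k\lambda_k$ diverges like the harmonic series while $\sum_k\lambda_k^2$ converges like a $p$-series with $p=2$; hence (iv) holds for all four choices.

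For condition (v), the crucial structural observation is that, by the definition \eqref{eq:nuki}, the vector factor $\bfnu^{k,i}$ is eventually frozen at $\bfnu^{k_1,i_1}$ as soon as $J_{k,i}>J_1$ (and for M1, M2 it is the constant ${\bf1}_q$ throughout), so it converges trivially to a positive vector bounded below by $\nu_1$. It then remains to show that the scalar factor $\alpha_{k,i}$ tends to a positive limit $\alpha_\infty$. For \eqref{preconditioner:KM} a direct computation gives $\alpha_{k,i}\to\varrho$, and for the Nesterov choice \eqref{preconditioner:Nesterov} the auxiliary sequence satisfies $t_{k,i}\to\infty$ with consecutive ratios tending to $1$, so that $(t_{k,i}-1)/t_{k,i+1}\to1$ and hence $\alpha_{k,i}\to2$. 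Consequently $\bfalpha^{k,i}\to\bfalpha:=\alpha_\infty\bfnu^{k_1,i_1}$ (respectively $\alpha_\infty{\bf1}_q$ for M1, M2), a positive vector, establishing (v).

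The main obstacle is condition (vi). Since $\bfnu^{k,i}$ is eventually constant, for all large $k$ one has $\bfalpha-\bfalpha^{k,i}=(\alpha_\infty-\alpha_{k,i})\bfnu^{k_1,i_1}$, so the vector series collapses to the scalar series $\sum_k\lambda_k(\alpha_\infty-\alpha_{k,i})$ times a fixed vector, and I would estimate the rate at which $\alpha_{k,i}\to\alpha_\infty$. For \eqref{preconditioner:KM} the exact identity $\varrho-\alpha_{k,i}=(\varrho\delta_1-\delta_2)/(kM+i-1+\delta_1)$ shows $\alpha_\infty-\alpha_{k,i}=\cO(k^{-1})$. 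For \eqref{preconditioner:Nesterov} the same order follows from $2-\alpha_{k,i}=(t_{k,i+1}-t_{k,i}+1)/t_{k,i+1}$ together with the asymptotics $t_{k,i}=\tfrac12(kM+i)+\cO(1)$ and $t_{k,i+1}-t_{k,i}\to\tfrac12$, whence $2-\alpha_{k,i}=\cO(k^{-1})$. Combining either estimate with $\lambda_k=\cO(k^{-1})$ yields a summand of order $k^{-2}$, so the series converges absolutely. The delicate step is the Nesterov asymptotic analysis, where pinning down $t_{k,i}\sim\tfrac12(kM+i)$ sharply enough to control $2-\alpha_{k,i}$ requires the expansion $t_{n+1}=(1+\sqrt{1+4t_n^2})/2=t_n+\tfrac12+\cO(t_n^{-1})$; everything else is routine. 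Having verified (i)--(vi), Theorem \ref{thm:convergenceSDP} delivers the claimed convergence of $\bff^{k,i}$ to the global minimizer of $\Phi$ on $\cB$.
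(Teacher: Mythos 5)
Your proposal follows essentially the same route as the paper's proof: reduce everything to Theorem \ref{thm:convergenceSDP}, obtain (i)--(iii) from Propositions \ref{prop:StrictlyConvex} and \ref{prop:LipGrad}, verify (iv) directly for $\lambda_k=\lambda_0/(ak+1)$, and establish (v)--(vi) by exploiting that $\bfnu^{k,i}$ is eventually constant and that $\alpha_{k,i}\to 2$ (Nesterov) or $\alpha_{k,i}\to\varrho$ (the form \eqref{preconditioner:KM}) at rate $\cO(k^{-1})$, so that $\sum_k\lambda_k(\alpha_\infty-\alpha_{k,i})$ converges like $\sum_k k^{-2}$. One minor quibble: your intermediate asymptotic $t_{k,i}=\tfrac12(kM+i)+\cO(1)$ is slightly too strong (the recursion $t_{n+1}=t_n+\tfrac12+\cO(t_n^{-1})$ accumulates a $\Theta(\ln k)$ correction), but this is harmless since your bound $2-\alpha_{k,i}=\cO(k^{-1})$ only requires the lower bound $t_{k,i}\geqslant(kM+i)/2$ and boundedness of $t_{k,i+1}-t_{k,i}$, both of which your expansion supplies.
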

	
	\section{Numerical results}
	\label{sec:NumRes}
	
	In this section, we present results of evaluations of the SDP-BSREM algorithms performance obtained by means of numerical experiments using both simulated and clinical PET data, in comparison with BSREM and with the clinical version of BSREM (Q.Clear, GE).

	\subsection{Simulation Setup}
	
	The algorithms were implemented using a 2D PET simulation model developed in the Matlab environment \cite{schmidtlein2017relaxed,lin2019krasnoselskii}.  The projection matrix, based on a single detector ring of a GE D710 PET/CT, was built using a ray-driven model with 32 parallel rays per detector pair. Cylindrical detector ring, consisting of 576 detectors whose width are 4 mm, was applied. The field of view (FOV) was set to 300 mm and 288 projection angles were used to reconstruct a 256$\times$256 image with pixel size 1.17 mm$\times$1.17mm. The true count projection data were obtained by forward projecting the phantom convolved in image space with an idealized point spread function (PSF). The PSF was a shift-invariant Gaussian function with full width at half maximum (FWHM) equal to 6.59 mm \cite{moses2011fundamental}. Uniform water attenuation, with attenuation coefficient 0.096 $\mathrm{cm}^{-1}$, was simulated using the PET image as support. Scatter was simulated by adding highly smoothed and scaled projection of the phantom to the attenuated image sinograms.  The scaling factor was equal to the estimated scatter fraction $SF\coloneqq  S/(T+S)$, where $T $ and $S$ are true and scatter counts respectively \cite{berthon2015petstep}. Random counts were simulated by adding a uniform distribution to the true and scatter count distributions scaled by a random fraction $RF\coloneqq R/(T+S+R)$, where $R$ is the random count \cite{berthon2015petstep}. The total count was defined as $TC\coloneqq T+S+R.$ In the simulations, it was $ TC=6.8\times 10^6 $ for high count data and $TC=6.8\times 10^5$ for low count data. In both cases $ SF=RF=0.25.$ The individual noise realizations were generated by adding the Poisson noise to the total count distribution. The same system matrix was used to simulate the data and to reconstruct them.
	
	To investigate the convergence acceleration and its impact on reconstructed images fidelity, two figures-of-merit were computed: the objective function ($ \Phi(\bff^k) $) and the normalized root mean square difference (NRMSD).  The region of interest (ROI) based NRMSD is defined by $ \sqrt{\sum_{j\in\Omega}(f_j^k-f_j^{\infty})^2}/\sqrt{\sum_{j\in\Omega}{(f_j^\infty})^2}, $ where $ \bff^{\infty} $ is the converged image at 1000 iterations by BSREM algorithm with 24 subsets in simulations, and $ \Omega $ is the ROI. In the simulations the global NRMSD is obtained by setting the ROI as the whole image.\par
	\begin{figure}[htbp!]
		\centering
		\setcounter {subfigure} {0} a){
			\includegraphics[width=2.5cm]{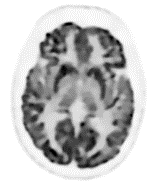}
		}
		\qquad
		\setcounter {subfigure} {0} b){
			\includegraphics[width=3.0cm]{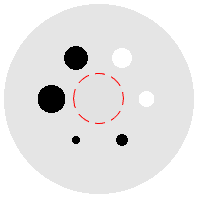}
		}
		\caption{Numerical phantom used in simulations.  a) Brain phantom. b) Uniform phantom: uniform background (1 ROI with radius 25 pixels is shown) with 6 uniform spheres of different radii (2 cold spheres and 4 hot spheres).}
		\label{fig:original image_simulated}
	\end{figure}
    Two $256\times256$ numerical 2D phantoms shown in Fig. \ref{fig:original image_simulated} were used in simulations. The brain phantom \cite{schmidtlein2017relaxed} was obtained from a high quality clinical PET image. The uniform phantom consists of 4 uniform hot spheres and 2 uniform cold spheres with distinct radii: 4, 6, 8 (cold), 10 (cold), 12, 14 pixels. The contrast ratio for the cold and hot spheres are $ 0:1 $ and $ 1:10 $, respectively. All simulations were performed in a 64-bit Windows 10 OS laptop with Intel Core i7-8550U Processor at 1.80 GHz, 16 GB of DDR4 memory and 512 GB SATA SSD.\par
	The parameter $t$ in $P_t$ was set to $10^{-4}$. The constant $\epsilon$ was set to $10^{-12}.$ The regularization parameter $ \beta $ in model \eqref{model-RDP} was set to $ 0.1 $ and $0.8$ for high and low count data respectively. In RDP regularization term, the parameter $\gamma_R$ was set to $2$ and $8$-point neighborhood was considered. The initialization $ \bff^0 $ was set to $ {\bf1}_q $ to examine the setting of $\bfnu^{k,i}$. We used the relaxation sequence defined by $ \lambda_0/(ak+1), a>0 $ . In all simulation experiments, for simplicity, we empirically set $\lambda_0=1, J_0=3, J_2=1000 $ and $ \delta_1=\delta_2. $  Other parameter values, shown in Table \ref{tab:AlgorithmParameter_2D}, were chosen based on the performance of objective function value.\par

	\begin{table}[!htbp]
		\centering
		\caption{Algorithmic parameters for 2D simulation reconstruction}
		\label{tab:AlgorithmParameter_2D}
		\begin{tabular}{|c|c|c|}
			\hline
			Parameters & Brain phantom & Uniform phantom  \\ \hline
			BSREM(12)  & \begin{tabular}[c]{@{}c@{}} high count: $a=1/400$\\ low count: $a=1/18$\end{tabular} & - \\ \hline
			SDP-P1(12) & \begin{tabular}[c]{@{}c@{}} high count: $a=1/13$,\\ $\nu_1=1.6,\nu_2=2.4$\\ low count: $a=0.5$,\\ 	$\nu_1=1.6,\nu_2=2.4$\end{tabular} 
			& -  \\ \hline
			SDP-P2(12) & \begin{tabular}[c]{@{}c@{}} high count: $a=1/5,\varrho=5$,\\ $\delta_1=5,\nu_1=0.8,\nu_2=2.2$\\ low count: 	$a=1.3,\varrho=7.5$,\\ $\delta_1=5,\nu_1=1.3,\nu_2=2.1$\end{tabular} 
			& - \\ \hline
			BSREM(24)  & \begin{tabular}[c]{@{}c@{}} high count: $a=1/35$\\ low count: $a=1/5$\end{tabular} & \begin{tabular}[c]{@{}c@{}} high count:\\ $a=1/35$ \end{tabular}\\ \hline
			SDP-P1(24) & \begin{tabular}[c]{@{}c@{}} high count: $a=0.35$,\\ $\nu_1=1.6,\nu_2=2.4$\\ low count: $a=1.3$,\\ 	$\nu_1=1.4,\nu_2=2.5$\end{tabular} 
			& \begin{tabular}[c]{@{}c@{}} high count: $a=0.5$,\\ $\nu_1=1.8,\nu_2=2.5$\end{tabular}  \\ \hline
			SDP-P2(24) & \begin{tabular}[c]{@{}c@{}} high count: $a=0.45,\varrho=4$,\\ $\delta_1=3,\nu_1=0.8,\nu_2=1.8$\\ low count: 	$a=1.4,\varrho=2.2$,\\ $\delta_1=1,\nu_1=1.3,\nu_2=2.4$\end{tabular} 
			& \begin{tabular}[c]{@{}c@{}} high count: $a=0.7$, \\ $\varrho=3,\delta_1=7$,\\ $\nu_1=1.4,\nu_2=2.3$\end{tabular} \\ \hline
		\end{tabular}
	\end{table}

	\subsection{Simulation Results}
	
	\subsubsection{Comparison of Gradient Consistency}
	To measure the directional consistency of two vectors, we computed the angle between them. The angle between vectors $ \bfv_1 $ and $ \bfv_2 $  is defined as $\theta(\bfv_1,\bfv_2)\coloneqq \arccos (\left\langle\bfv_1,\bfv_2\right\rangle / (\|\bfv_1\|_2\|\bfv_2\|_2)).
	$
	We define the smooth areas sequence by 
	$$
	I_s^{k,i}\coloneqq \left\lbrace j\in\bN_q: \mathrm{grad}(\bff^{k,i})_j<0.01\cdot\mathrm{mean}(\bff^{k,i}) \right\rbrace,
	$$
	and the variable areas sequence by
	$$
	I_v^{k,i}\coloneqq \left\lbrace j\in\bN_q: \mathrm{grad}(\bff^{k,i})_j>0.2\cdot\mathrm{mean}(\bff^{k,i}) \right\rbrace.
	$$
	In order to estimate the consistency of $ \nabla_{I_s^{k,i}}\Phi_{i-1}(\bff^{k,i-1}) $ and $ \nabla_{I_s^{k,i}}\Phi_i(\bff^{k,i}) $, we computed the angle  between them. For smooth areas, we define the angle sequence by 
	$$\theta_{k,i}\coloneqq \theta(\nabla_{I_s^{k,i}}\Phi_{i-1}(\bff^{k,i-1}),\nabla_{I_s^{k,i}}\Phi_i(\bff^{k,i})),$$
	and the average angle in each iteration by
		$
		\theta_k\coloneqq \sum_{i=1}^{M}\theta_{k,i}/M,
		$
		where $ \Phi_0\coloneqq \Phi_M. $ Similarly, for variable areas, we computed 
		$$ \tilde{\theta}_{k,i}\coloneqq \theta(\nabla_{I_v^{k,i}}\Phi_{i-1}(\bff^{k,i-1}),\nabla_{I_v^{k,i}}\Phi_i(\bff^{k,i})) $$ and $ \tilde{\theta}_k\coloneqq \sum_{i=1}^{M}\tilde{\theta}_{k,i}/M. $
	\begin{figure}[htbp!]
		\centering
		\subfigure{
			\includegraphics[width=0.55\textwidth]{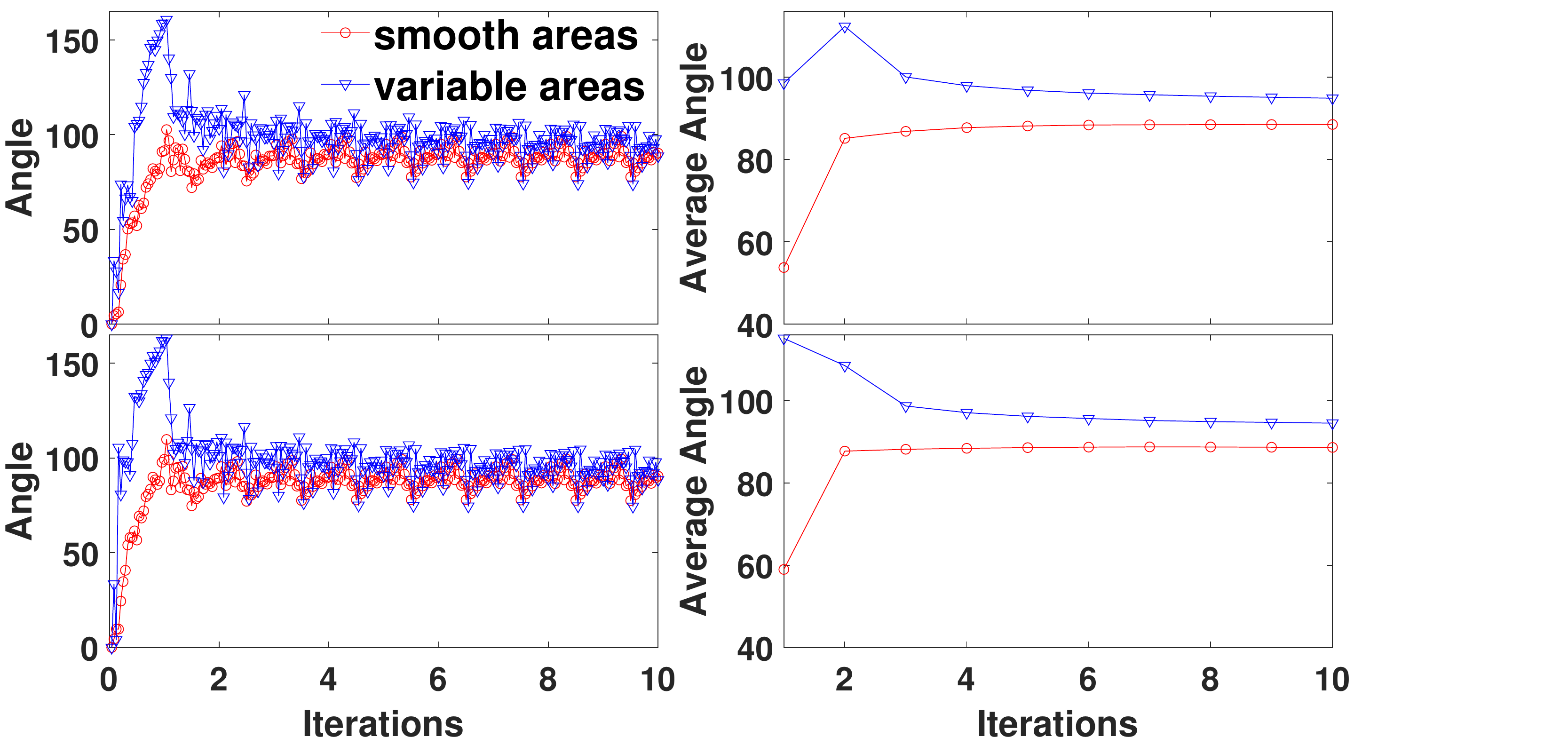}
		}
		\caption{Angle (left) and average angle (right) between the gradients of the successive subiterations vs. iterations projected in the smooth areas and variable areas in the reconstructed images, respectively, for the brain phantom with high count data. Top row:  SDP-P1 (24). Bottom row: SDP-P2(24).}
		\label{fig:Angle}
	\end{figure}
	In Fig. \ref{fig:Angle}, we observed that for SDP-P1 and SDP-P2, the angle and average angle in smooth areas were smaller than those in the areas with more variability.  This is consistent with our conjecture that the directions of gradients in the smooth areas more accurately point toward the minimizer than those in the variable areas. Hence, larger step-sizes in the smooth areas and smaller step-sizes in the variable areas are reasonable.
	
	\subsubsection{Comparison of Preconditioners}
	In order to reveal the improvement due to the application of a preconditioner, as compared to the use of a momentum, we compared SDP-BSREM algorithm with four different preconditioners: P1, P2, M1, and M2, where M1 and M2 are surrogates of momenta. The parameter $a$ in SDP-M1(12) and SDP-M1(24) was set to $1/50$ and $1/6$, respectively. And the parameters $a, \varrho, \delta_1$ in SDP-M2(12) and SDP-M2(24) were set to $1/15,3,1$ and $1/5,2.6,0.5$, respectively. In Fig. \ref{fig:ComparePreconditioner}, one can observe that SDP-P1 and SDP-P2 outperform SDP-M1 and SDP-M2, in reaching the same objective function value, by 25-30\% and 25\%, respectively.\par
	
	\begin{figure}[htbp!]
		\centering
		\subfigure{
			\includegraphics[width=0.48\textwidth]{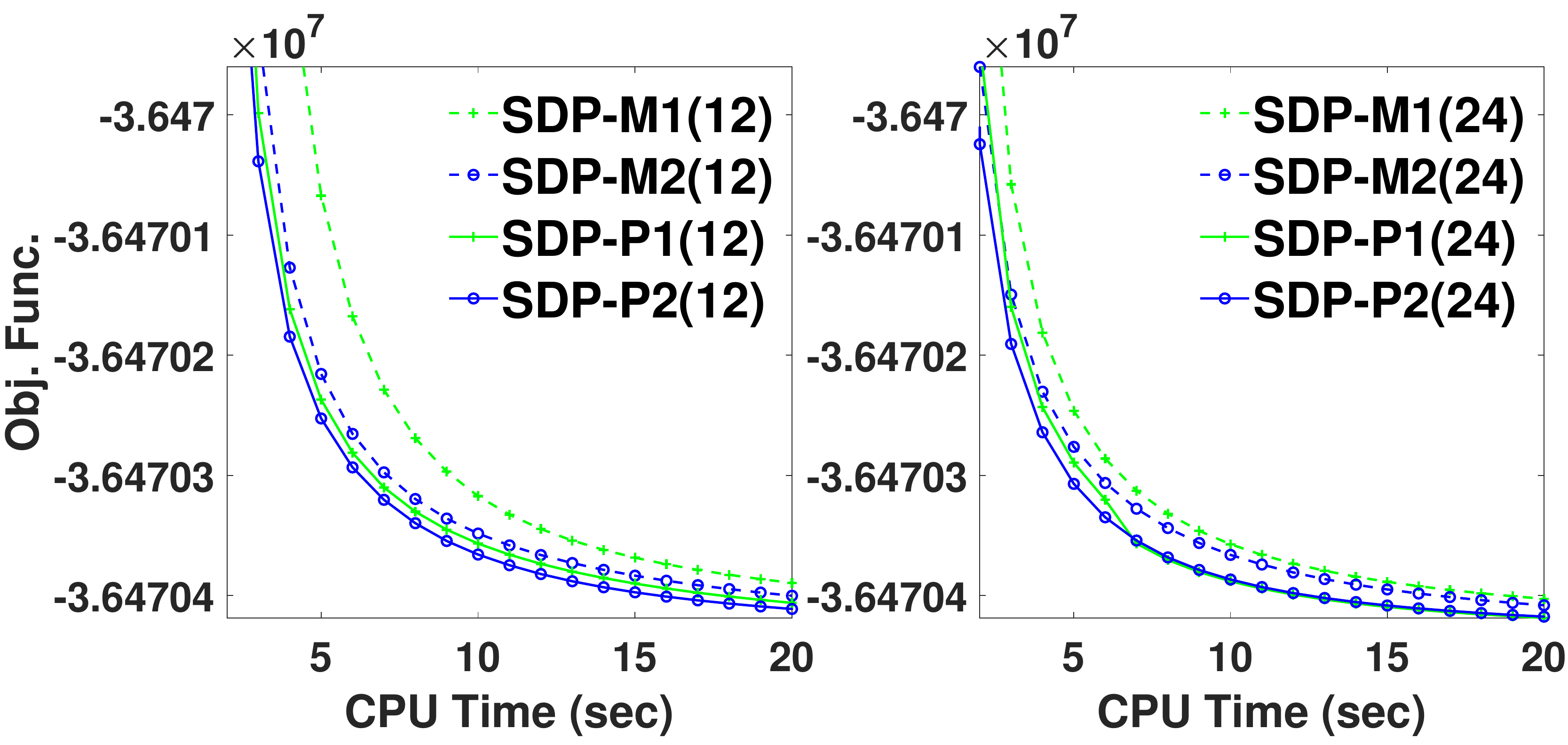}
		}
		\caption{Comparison of performance of preconditioners investigated in this study. Objective function vs. elapsed CPU time in reconstructions performed with SDP-BSREM algorithm with four preconditioners: M1, M2, P1, and P2, and with  12 subsets (left) and 24 subsets (right), respectively, for the brain phantom with high count data. Preconditioners P1 and P2 were generalized from M1 and M2, respectively.}
		\label{fig:ComparePreconditioner}
	\end{figure}

	\subsubsection{Comparison of SDP-BSREM with BSREM}
	In this subsection, we analyzed the performance of SDP-BSREM algorithms compared to the BSREM algorithm. First, we showed the global NRMSD for all the algorithms, with 12 and 24 subsets, in Fig. \ref{fig:GlobalNRMSD}, using the brain phantom. It showed that all algorithms converged to the same solution for both low and high count data.  Further, this figure showed that SDP-P1 and SDP-P2 outperformed BSREM with respect to global NRMSD. To analyze convergence acceleration, we showed the objective function values of each algorithm in Fig. \ref{fig:OFV_SDP}. In this figure, one can observe that both proposed algorithms, SDP-P1 and SDP-P2, outperform the BSREM algorithm, in reaching the same objective function value, by roughly a factor of two for all cases: 12 and 24 subsets for both low and high count data using the brain phantom.\par
	\begin{figure}[htbp!]
		\centering
		\subfigure{
			\includegraphics[width=0.55\textwidth]{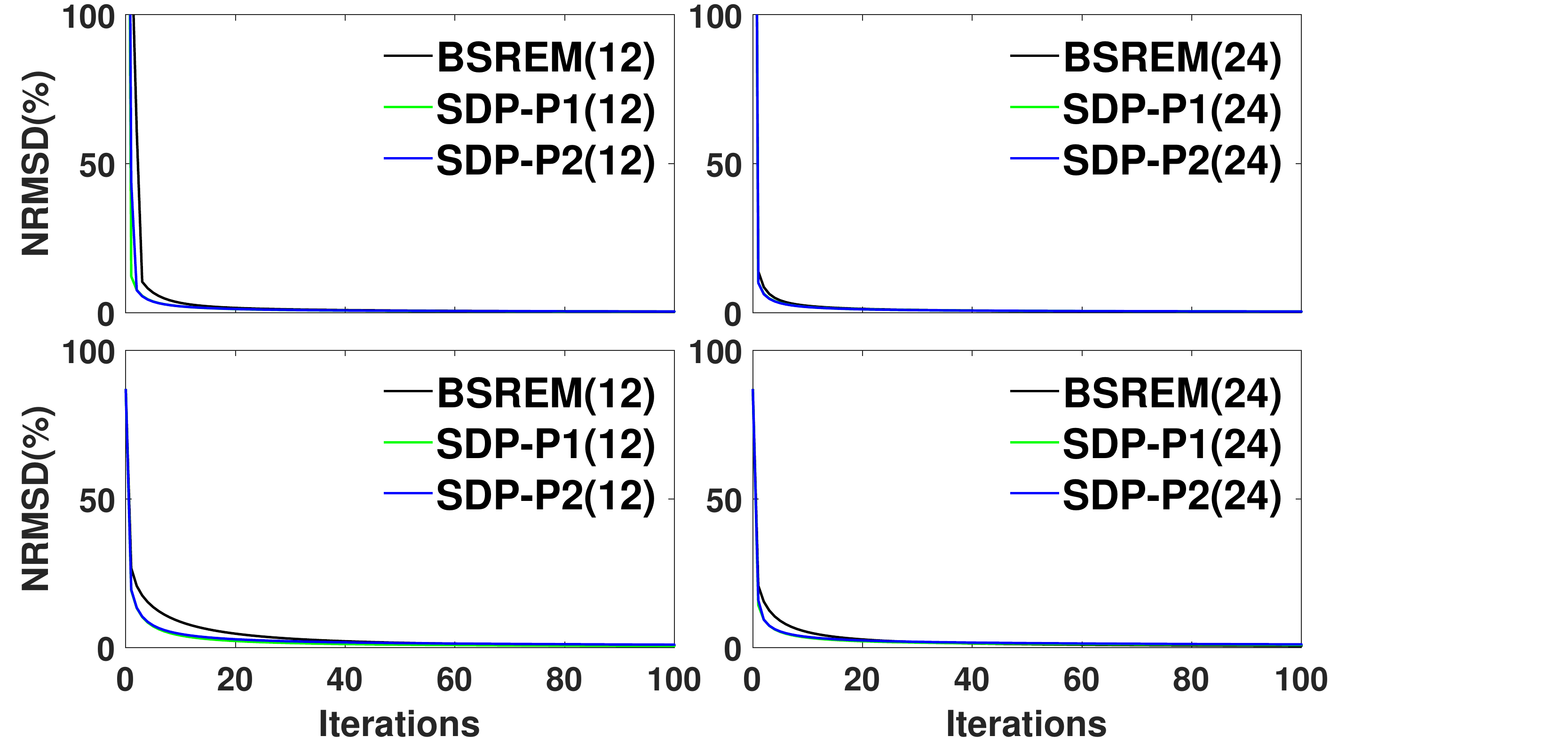}
		}
		\caption{Global NRMSD vs. Iterations in reconstructions performed with different algorithms: BSREM(12), SDP-P1(12), SDP-P2(12), BSREM(24), SDP-P1(24), SDP-P2(24) for the brain phantom with low (top row) and high (bottom row) count data, respectively.}
		\label{fig:GlobalNRMSD}
	\end{figure}
	\begin{figure}[htbp!]
		\centering
		\subfigure{
			\includegraphics[width=0.65\textwidth]{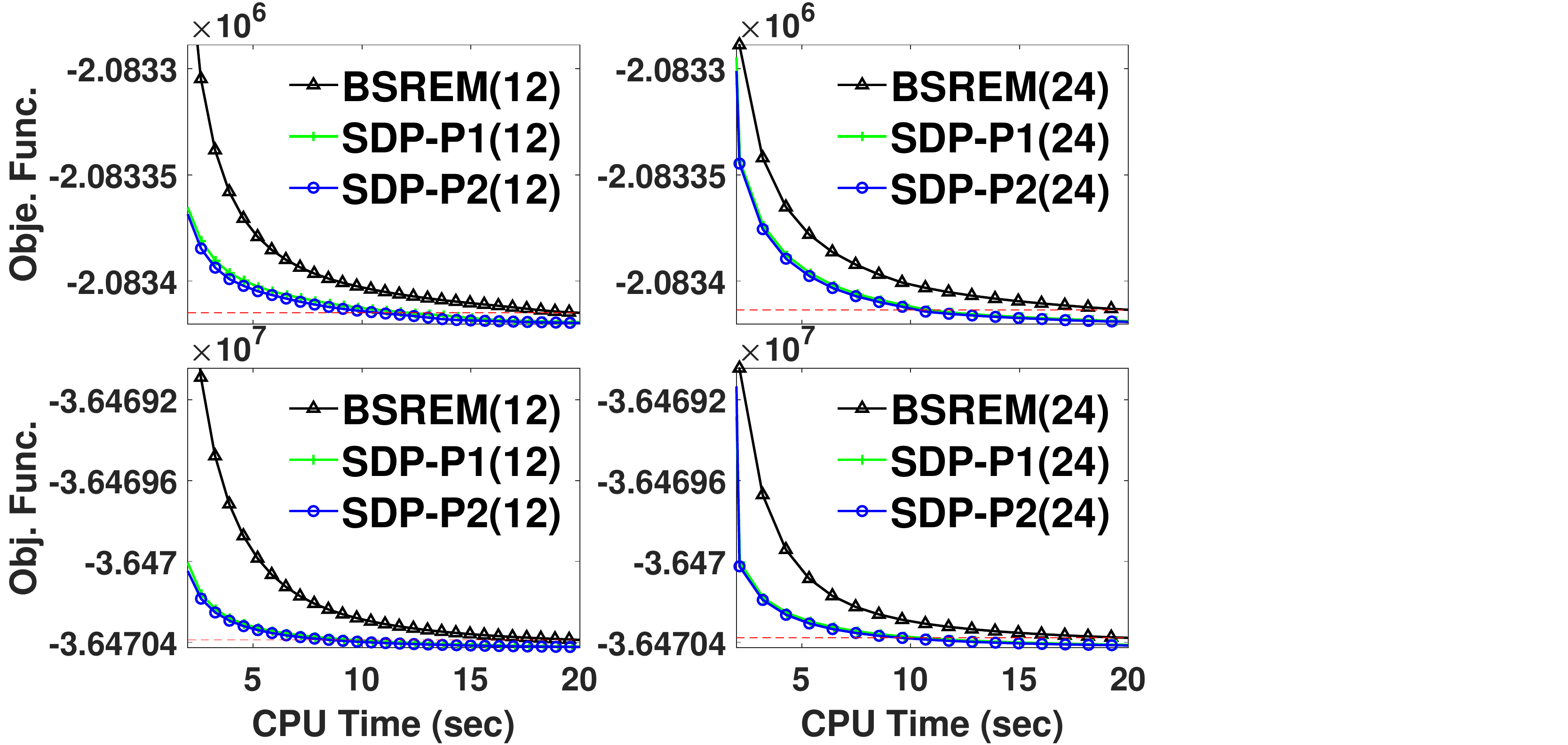}
		}
		\caption{Comparison of performance of SDP-BSREM vs. BSREM algorithm. Objective function vs. elapsed CPU time in reconstructions performed with BSREM, SDP-P1, and SDP-P2, with 12 (left) and 24 (right) subsets for the brain phantom with low (top row) and high (bottom low) count data , respectively. The dash lines represent the objective function values of BSREM at 20 seconds CPU time.}
		\label{fig:OFV_SDP}
	\end{figure}
	\begin{figure}[htbp!]
		\centering
		\subfigure{
			\includegraphics[width=.47\textwidth,left]{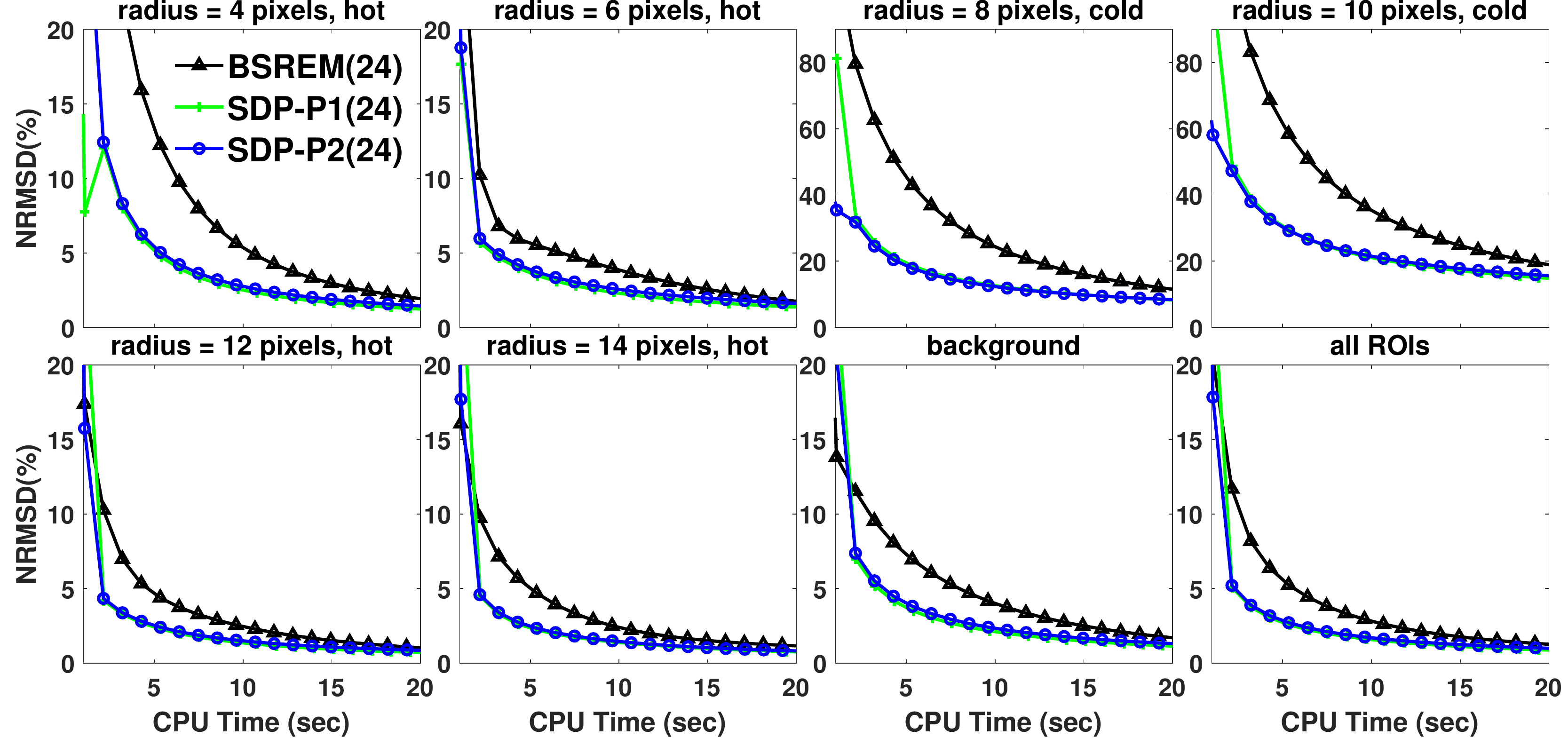}
		}
		\caption{Comparison of local convergence performance of SDP-BSREM vs. BSREM algorithms for the uniform phantom with high count data. ROI based normalized root mean square difference (NRMSD) vs. iterations is shown. The eight ROIs are the 4 hot spheres, 2 cold spheres, 1 background spheres, and the region consisting of all the former 7 ROIs, named ``all ROIs".}
		\label{fig:NRMSDUniformPhantom}
	\end{figure}
	Next, we examined the local convergence performance of SDP-BSREM algorithms by ROI based NRMSD in 8 different ROIs with different contrast ratios in the reconstructions of the uniform phantom. High count data and 24 subset were used in this experiments. In Fig. \ref{fig:NRMSDUniformPhantom}, we observed that the proposed SDP-P1 and SDP-P2 algorithms converged fast than BSREM algorithm in all 8 ROIs.\par
	
	\subsection{Clinical Results}
	{\color{black}The reconstructions were performed with our SDP-BSREM algorithm with P2 preconditioner and with commercial Q.Clear by means of the GE toolbox \cite{GEPETToolbox5.0} with the penalty weight ($\beta$) set to the default value of 350. Because the 2D-projectors used in the simulations and 3D-projectors used in clinical data reconstructions were scaled differently, the penalty values used in respective reconstructions differed substantially.}\par
	
	\begin{figure}[htbp!]
		\centering
		\setcounter {subfigure} {0} a){
			\includegraphics[width=1.7cm]{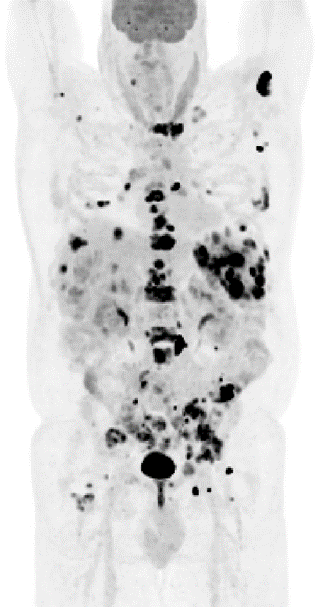}
		}	
	\qquad
		\setcounter {subfigure} {0} b){
			\includegraphics[width=3.3cm]{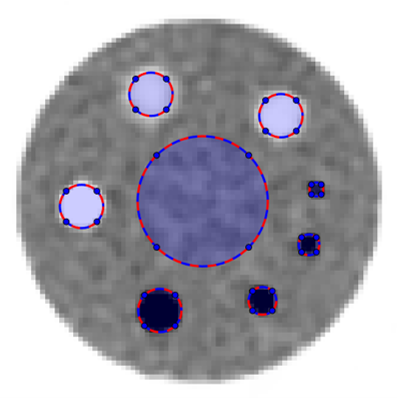}
		}
		\caption{Clinical PET patient and ACR phantom.  a) Clinical PET patient: Coronal maximum intensity projection (MIP) of a clinical whole-body 18F-FDG PET patient image acquired on a GE D710 PET/CT and reconstructed using the Q.Clear clinical method \cite{ahn2015quantitative}. b) Clinical ACR quality assurance phantom showing the regions of interest for cold/hot cylinders, 0:1 and 2.5:1 activity concentration ratios, respectively, and  background \cite{Acr2012pet}.}
		\label{fig:original image_clinical}
	\end{figure}
	To mimic the GE's clinical implementation of Q.Clear, 25 and 8 iterations were used for non-TOF and TOF data, respectively, with 24 subsets in the experiments.  For the same reason we initialized both the non-TOF and TOF reconstructions using OSEM with 2 iterations and 24 subsets.  The reconstructions with TOF data were further initialized using 3 iterations with 24 subsets of non-TOF algorithm.  This gives a more clinically realistic view of the performance, but at the cost of being able to isolate TOF performance. 
	
	The parameter values are shown in Table \ref{tab:AlgorithmParameter_3D}. For simplicity, since good initializations were used, we set $J_0 = 0$ and $ J_1 = 1000$. The other algorithmic parameters were found via an iterative golden search procedure using a single bed position (centered on the Derenzo region) from an ACR PET phantom \cite{macfarlane2006acr} with similar count characteristics as the patient's data.  Using this phantom each parameter was sequentially optimized with 5\% tolerance and then used in search for the next parameter until parameter values ceased to change ($\sim$3 iterations).  
	\begin{table}[!htbp]
		\centering
		\caption{Algorithmic parameters for 3D patient reconstruction}
		\label{tab:AlgorithmParameter_3D}
		\begin{tabular}{|c|c|}
			\hline
			Algorithm  & Parameters    \\ \hline
			Q.Clear(nonTOF) & $\lambda_0=2, a=1/5$  \\ \hline
			Q.Clear(TOF)    & $\lambda_0=1.2, a=1/5$ \\ \hline
			SDP-P2(nonTOF)  & \begin{tabular}[c]{@{}c@{}}$\lambda_0=1.6, a=1/4, \varrho=2.2$, \\ 
			$\delta_1=0.1, \delta_2= 1.6, \nu_1=0.6, \nu_2=1.25$\end{tabular} \\ \hline
			SDP-P2(TOF)     & \begin{tabular}[c]{@{}c@{}}$\lambda_0=1.1, a=1/3, \varrho=2.4$, \\ 
			$\delta_1=0.6, \delta_2= 1.6, \nu_1=0.6, \nu_2=1.25$\end{tabular} \\ \hline
		\end{tabular}
	\end{table}
	
	\begin{figure}[htbp!]
		\centering
		\subfigure{
			\includegraphics[width=.475\textwidth,left]{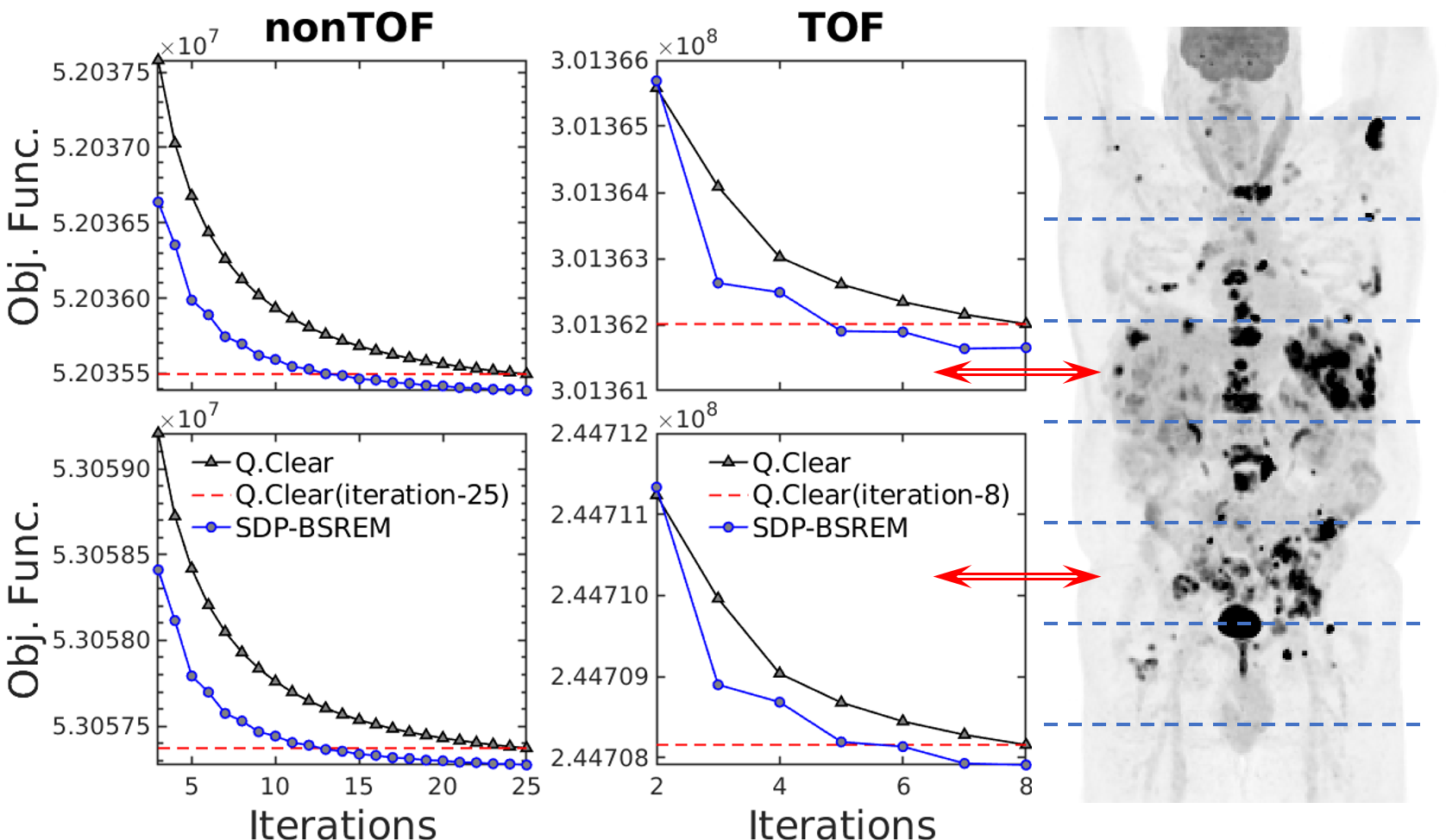}
		}
		\caption{Comparison of performance of SDP-BSREM vs. Q.Clear $(\beta = 350)$ algorithms. A whole-body 18F-FDG clinical PET patient scan was used.  Eight patient bed positions separated by dashed lines are shown in the coronal maximum intensity projection (MIP) image. The objective function vs. iterations is shown for PET scanner patient bed positions 4 and 6 ({\color{red}red arrows}) for nonTOF (left) and TOF data (right). The dashed lines represent the objective function values at the final Q.Clear iterations.}
		\label{fig:clinical}
	\end{figure}
	
	In Fig. \ref{fig:clinical} we show convergence, via the objective function value as a function of iteration for non-TOF/TOF data, for an $^{18}$F-FDG whole-body PET clinical patient (shown in Fig. \ref{fig:original image_clinical}a). This data was obtained from 8 bed positions acquired on a GE D710 PET/CT. The nominal administered activity and post-administration acquisition were 444 MBq and 1-hour,
	respectively, with 3-minute dwell times and 25\% overlap, resulting in $[4.1 / 3.4 / 4.2 / \textbf{4.5} / 4.6 / \textbf{3.8} / 3.1 / 2.6]\times 10^7$ total counts, where the bolded numbers are from the bed positions 4 and 6 shown in Fig. \ref{fig:clinical}.
	
	{\color{black} We observed that our SDP-BSREM method outperformed the Q.Clear algorithm, in reaching the same objective function value, by 40-50\% and 35-50\% for non-TOF and TOF data, respectively. We note that both the clinical 3D projection and penalty operator have much greater computational complexity than the convergence acceleration scheme described in section \ref{section_Convergence_of_SDP-BSREM_algorithm}. Hence, the increased computational cost required for the use of the SDP is negligible.}\par
	
	\begin{figure}[htbp!]
		\centering
		\subfigure{
			\includegraphics[width=.475\textwidth,left]{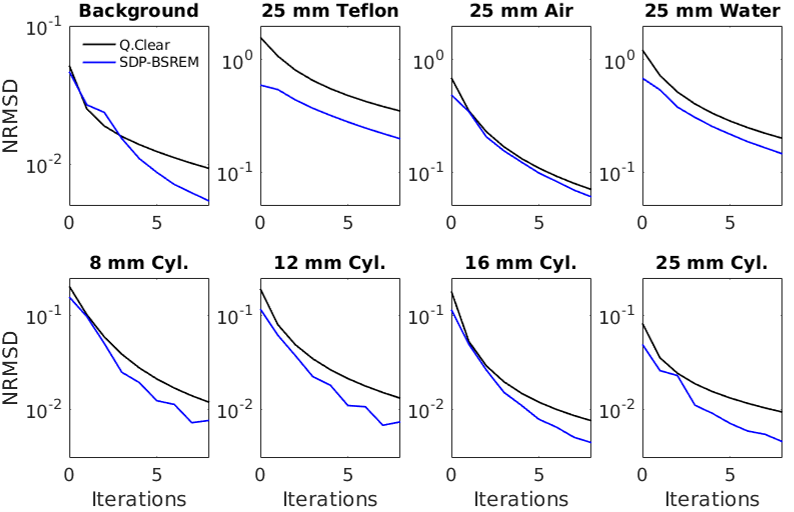}
		}
		\caption{Local convergence is assessed using the 8 regions of interest from an ACR quality assurance test with TOF data \cite{Acr2012pet}. These regions of interest can be seen in Fig. \ref{fig:original image_clinical}b. Each subplot represents one of the eight regions, which are from left to right and top to bottom: background, cold Teflon/air/water, and hot 8/12/16/25 mm cylinders, respectively.}
		\label{fig:NOFV_vs_NRMSD}
	\end{figure}
	To evaluate the local convergence, TOF data from a quarterly ACR quality assurance test was used.  Following ACR guidelines \cite{Acr2012pet} the activity corresponded to a nominal 444 MBq (12 mCi) of $^{18}$F-FDG administration used at MSKCC. The upper proton of this phantom contains 8 regions (3 cold, 4 hot, and background) with nominal contrast ratios of 0:1 and 2.5:1 for the cold and hot cylinders, respectively.  ROI were defined using the cylinder boundaries from registered CT images. Using the methodology described by Kim \etal \cite{kim2019time}, for each ROI we measured the NRMSD,	where the $f^\infty$ in NRMSD is the converged image at 300 iterations by Q.Clear without subsets.  These reconstructions used the same parameters as those used in the whole body patient reconstructions (\ie $J_0 = 0, J_1 = 1000$ and Table \ref{tab:AlgorithmParameter_3D}). The results are shown in Fig. \ref{fig:NOFV_vs_NRMSD}.  For each ROI, the SDP-BSREM method converged to $f^\infty$ faster than Q.Clear.
	
	\section{Conclusion}
	{\color{black}
	In this paper, we have presented the SDP-BSREM algorithms with two SDPs and their global convergence theorems. The two SDPs were designed based on the smoothness-promoting property in the reconstructed images of the regularization term. We tested these algorithms using both simulated and clinical PET data. Using two simulated phantoms, our numerical studies showed that, for solving the RDP regularized PET image reconstruction model, our proposed algorithms converged more quickly than BSREM.  Similarly, when using clinical patient and phantom PET data, our proposed algorithm SDP-P2 outperformed Q.Clear. We plan to test the SDP-BSREM algorithm on more varied data sets acquired under a wide range of conditions seen in the clinic.
	
	\appendices
	\section{}
	\label{appendix:ConvexAndGradient}
	This appendix includes the proof of strict convexity and Lipschitz continuous gradient of the objective function $\Phi$.
	\begin{prop}\label{prop:StrictlyConvex}
		If $\bfA^\top\bfg\neq{\bf0}$, then the objective function $\Phi(\bff)$ in \eqref{eq:ObjectiveFunction} is strictly convex on $\bR^q_+$. 
	\end{prop}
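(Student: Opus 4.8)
The plan is to prove strict convexity by showing that the Hessian $\nabla^2\Phi(\bff)$ is positive definite at every $\bff\in\bR^q_+$. Since $\Phi$ is $C^2$ on a neighborhood of the convex set $\bR^q_+$ (note $\bfA\bff+\bfgamma>{\bf0}$ there because $\bfgamma\in\bR^p_{++}$ and $\bfA\geq 0$), positive definiteness of the Hessian on $\bR^q_+$ yields strict convexity on $\bR^q_+$. Writing $\Phi=F+\beta R$ (the regularized case of interest, $\beta>0$), both $F$ and $R$ are convex, so $\nabla^2 F(\bff)\succeq 0$ and $\nabla^2 R(\bff)\succeq 0$; hence for a direction $\bfv$ the form $\bfv^\top\nabla^2\Phi(\bff)\bfv$ vanishes only if $\bfv^\top\nabla^2 F(\bff)\bfv=0$ and $\bfv^\top\nabla^2 R(\bff)\bfv=0$ simultaneously. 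A direct differentiation of $F$ gives $\nabla^2 F(\bff)=\bfA^\top\diag\!\big(g_i/((\bfA\bff)_i+\gamma_i)^2\big)\bfA$, so that $\bfv^\top\nabla^2 F(\bff)\bfv=\sum_i \tfrac{g_i}{((\bfA\bff)_i+\gamma_i)^2}(\bfA\bfv)_i^2$; this quantity is zero precisely when $(\bfA\bfv)_i=0$ for every index $i$ with $g_i>0$.

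First I would analyze the null space of $\nabla^2 R(\bff)$. Since $R$ is a sum over neighboring pairs of the terms $\phi(f_j,f_k)\coloneqq (f_j-f_k)^2/(f_j+f_k+\gamma_R|f_j-f_k|+\epsilon)$, each of which is $C^2$ (the cubic-in-$|f_j-f_k|$ behavior near $f_j=f_k$ being twice differentiable), the form $\bfv^\top\nabla^2 R(\bff)\bfv$ is a sum of the nonnegative per-pair contributions $\bfv^\top\nabla^2\phi\,\bfv$ and thus vanishes iff each one does. The key computation is that the $2\times 2$ Hessian of $\phi$ in $(f_j,f_k)$ has vanishing determinant, i.e. it is positive semidefinite of rank one, with null direction characterized by $v_j/(2f_j+\epsilon)=v_k/(2f_k+\epsilon)$. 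Imposing this identity across every edge of the connected $8$-point neighborhood graph forces the ratio $v_j/(2f_j+\epsilon)$ to equal a single constant $\kappa$ over all pixels, so $\bfv^\top\nabla^2 R(\bff)\bfv=0$ exactly when $\bfv=\kappa(2\bff+\epsilon{\bf1}_q)$. This rank-one determinant computation, together with the connectivity argument that collapses the per-edge constraints to a one-dimensional null space, is where I expect the main effort to lie.

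Finally I would combine the two characterizations. Suppose $\bfv^\top\nabla^2\Phi(\bff)\bfv=0$; then $\bfv=\kappa(2\bff+\epsilon{\bf1}_q)$ and $(\bfA\bfv)_i=0$ for all $i$ with $g_i>0$. Because $\bfA\geq 0$ and $\bff\in\bR^q_+$, we have $(\bfA\bfv)_i=\kappa\big(2(\bfA\bff)_i+\epsilon(\bfA{\bf1}_q)_i\big)$ with both summands nonnegative. The assumption $\bfA^\top\bfg\neq{\bf0}$ guarantees some index $i$ with $g_i>0$ and $(\bfA{\bf1}_q)_i=\sum_j A_{ij}>0$ (otherwise every row of $\bfA$ supported by $\bfg$ would vanish, forcing $\bfA^\top\bfg={\bf0}$); for that $i$ the quantity $(\bfA\bfv)_i$ equals $\kappa$ times a strictly positive number, so $(\bfA\bfv)_i=0$ forces $\kappa=0$ and hence $\bfv={\bf0}$. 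Therefore $\nabla^2\Phi(\bff)\succ 0$ on $\bR^q_+$, and $\Phi$ is strictly convex, completing the proof.
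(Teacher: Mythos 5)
Your proposal is correct and takes essentially the same approach as the paper: both prove positive definiteness of $\nabla^2\Phi=\nabla^2F+\beta\nabla^2R$ by writing $\nabla^2F=\bfA^\top\bfG\bfA$, identifying the null space of $\nabla^2R$ as the span of $2\bff+\epsilon{\bf1}_q$, and then using $\bfA^\top\bfg\neq{\bf0}$ to show $\nabla^2F$ is strictly positive along that one remaining direction. Your per-edge rank-one determinant computation plus graph connectivity is the same fact as the paper's completed-square identity $\bfx^\top\nabla^2R\,\bfx=\sum_{j}\sum_{k\in N_j}2\left((2f_k+\epsilon)x_j-(2f_j+\epsilon)x_k\right)^2/(f_j+f_k+\gamma_{R}|f_j-f_k|+\epsilon)^3$, merely localized pair by pair (with the connectivity step, which the paper leaves implicit, made explicit).
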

	\begin{proof}
		From \eqref{eq:FidelityTerm}, the gradient of fidelity term $F$ is given by
		$
		\nabla F = \bfA^{\top}({\bf1}_p-\bfg/(\bfA\bff+\bfgamma)).
		$
		Then the Hessian matrix of $F$ can be computed as follows:
		\begin{equation}\label{Hessian_F}
			\nabla^2F(\bff)=\bfA^\top\bfG\bfA
		\end{equation}
		with $\bfG$ a diagonal matrix and
		$\bfG=\diag({\bfg}/{(\bfA\bff+\bfgamma)^2})$.
		The first-order partial derivative of the RDP term $R(\bff)$
		is given by
		\begin{equation}
			\frac{\partial R}{\partial f_j}\! = \!2\!\!\sum_{k \in N_j}\! \frac{(f_j-f_k)(\gamma_{R}|f_j-f_k|+f_j+3f_k+2\epsilon)}{(f_j+f_k+\gamma_{R}|f_j-f_k|+\epsilon)^2}.
			\label{eqn:gradRD}
		\end{equation}
		Then we have the second-order partial derivative of $R$ as follows:
		\begin{equation}\label{eq:SecondDerivative}
			\frac{\partial^2R}{\partial f_j\partial f_k}=\begin{dcases}
				\sum_{l\in N_j}\frac{4  (2f_l+\epsilon)^2}{(f_j+f_l+\gamma_{R}\left|f_j-f_l\right|+\epsilon)^3} &\text{if } k=j,\\
				-\frac{4 \left( 4f_j f_k+2\epsilon(f_j+f_k)+\epsilon^2\right)}{(f_j+f_k+\gamma_{R}\left|f_j-f_k\right|+\epsilon)^3} &\text{if } k\in N_j,\\
				0 &\text{otherwise}.
			\end{dcases}
		\end{equation}
		For any ${\bf0}\neq\bfx\in\bR^q,$
		ignoring the zero entries of $ \nabla^2R, $ we obtain
		\begin{equation}\label{eq:Quadratic}
			\bfx ^\top\nabla^2 R\bfx=\sum_{j=1}^q\frac{\partial^2R}{\partial f_j^2}x_j^2+\sum_{j=1}^q\sum_{k\in N_j}\frac{\partial^2R}{\partial f_j\partial f_k}x_j x_k.
		\end{equation}
		By \eqref{eq:SecondDerivative} and \eqref{eq:Quadratic}, we have
		\begin{equation}\label{Hessian_R}
			\bfx ^\top\nabla^2 R\bfx=\sum_{j=1}^q\!\sum_{k\in N_j}\!\frac{2  \left((2f_k+\epsilon)x_j-(2f_j+\epsilon)x_k\right)^2}{(f_j+f_k+\gamma_{R}\left|f_j-f_k\right|+\epsilon)^3}.
		\end{equation}
		We can see that $\bfx^\top\nabla^2R\bfx\geqslant0$ and for any $ \bfx\neq {\bf0}, \bfx^\top\nabla^2R\bfx=0$ if and only if  there exists a nonzero constant $c,$ such that $\bfx=c(2\bff+\epsilon)$. For any $\bfx=c(2\bff+\epsilon)\neq{\bf0},$ we have
		$\bfx^\top\nabla^2F\bfx=c^2\|\bfG^{1/2}\bfA(2\bff+\epsilon)\|^2,$
		thus $ \bfx^\top\nabla^2F\bfx>0 $ by using $\bfA^\top\bfg\neq{\bf0}.$ Since
		$\nabla^2\Phi(\bff)=\nabla^2F(\bff)+\beta\nabla^2 R(\bff),$	one can obtain that
		$\bfx^\top\nabla^2\Phi\bfx>0$ for all $\bfx\neq{\bf0}.$ Then $\Phi$ is strictly convex on $\bR^q_+.$
	\end{proof}

	\begin{prop}\label{prop:LipGrad}
		The objective function $\Phi(\bff)$ in \eqref{eq:ObjectiveFunction} has a Lipschitz continuous gradient on $ \bR^q_+ $.
	\end{prop}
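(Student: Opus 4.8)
The plan is to reduce the Lipschitz-gradient claim to a uniform bound on the operator norm of the Hessian $\nabla^2\Phi$ over $\bR^q_+$. Since $\Phi$ is twice differentiable and $\bR^q_+$ is convex, for any $\bff,\bfh\in\bR^q_+$ the segment joining them stays in $\bR^q_+$, so that $\nabla\Phi(\bff)-\nabla\Phi(\bfh)=\int_0^1\nabla^2\Phi(\bfh+s(\bff-\bfh))(\bff-\bfh)\,ds$, whence $\|\nabla\Phi(\bff)-\nabla\Phi(\bfh)\|\leqslant\big(\sup_{\bfz\in\bR^q_+}\|\nabla^2\Phi(\bfz)\|\big)\|\bff-\bfh\|$. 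Thus it suffices to show $L\coloneqq\sup_{\bfz\in\bR^q_+}\|\nabla^2\Phi(\bfz)\|<\infty$, and because $\nabla^2\Phi=\nabla^2 F+\beta\nabla^2 R$, I would bound the two Hessians separately.

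For the fidelity term I would use the expression $\nabla^2 F(\bff)=\bfA^\top\bfG\bfA$ from \eqref{Hessian_F}, with $\bfG=\diag(\bfg/(\bfA\bff+\bfgamma)^2)$. Here the positivity of the background, $\bfgamma\in\bR^p_{++}$, is decisive: since $\bfA$ has nonnegative entries and $\bff\geqslant{\bf0}$, we have $\bfA\bff+\bfgamma\geqslant\bfgamma$ componentwise, so each diagonal entry of $\bfG$ is at most $g_i/\gamma_i^2$. Hence $\bfG\preceq\diag(\bfg/\bfgamma^2)$ and $\nabla^2 F(\bff)\preceq\bfA^\top\diag(\bfg/\bfgamma^2)\bfA$, a fixed matrix independent of $\bff$, which gives a uniform bound on $\|\nabla^2 F\|$.

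The main obstacle is bounding $\nabla^2 R$, since the entries $f_j,f_k$ range over the unbounded set $\bR^q_+$. I would start from the quadratic-form identity \eqref{Hessian_R} and apply $(a-b)^2\leqslant 2a^2+2b^2$ to split each summand. The key estimate is that the cubic denominator controls the quadratic numerator uniformly: because $\gamma_R\geqslant 0$ and $f_j\geqslant 0$, the denominator satisfies $f_j+f_k+\gamma_R|f_j-f_k|+\epsilon\geqslant f_k+\epsilon$, while $(2f_k+\epsilon)^2\leqslant 4(f_k+\epsilon)^2$, so that $(2f_k+\epsilon)^2/(f_j+f_k+\gamma_R|f_j-f_k|+\epsilon)^3\leqslant 4/(f_k+\epsilon)\leqslant 4/\epsilon$, with the symmetric bound holding when $f_j$ replaces $f_k$. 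Here the constant $\epsilon>0$ is precisely what keeps the denominator bounded away from zero, and the cubic-over-quadratic structure is what tames large $f_j,f_k$. Summing these bounds over $j$ and $k\in N_j$ and collecting the coefficients of $x_j^2$ and $x_k^2$ — each counted at most $\max_j|N_j|$ times, by symmetry of the neighborhood relation — yields $\bfx^\top\nabla^2 R(\bff)\bfx\leqslant(C/\epsilon)\|\bfx\|^2$ for a constant $C$ depending only on the neighborhood size. Combining this with the bound on $\nabla^2 F$ shows $L<\infty$, which completes the argument.
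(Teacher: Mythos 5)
Your proof is correct, and its overall skeleton---bound $\nabla^2 F$ and $\nabla^2 R$ separately, then pass from a uniform Hessian bound on the convex set $\bR^q_+$ to Lipschitz continuity of the gradient---matches the paper's (the paper invokes Nesterov's Lemma 1.2.2 where you integrate the Hessian along a segment, and its bound on $\nabla^2 F$ via $(\bfA\bff+\bfgamma)_i\geqslant\gamma_i$ is the same as yours). Where you genuinely diverge is the key step, the bound on $\nabla^2 R$. The paper argues softly: setting $h(\bff):=\bfx^\top\nabla^2R(\bff)\bfx$ for unit $\bfx$, it asserts that $h$ is continuous and that $\lim_{\|\bff\|_2\to\infty}h(\bff)=0$, and concludes boundedness. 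You instead prove a hard, explicit estimate: split the squared numerator in \eqref{Hessian_R} by $(a-b)^2\leqslant 2a^2+2b^2$, use $(2f_k+\epsilon)^2\leqslant 4(f_k+\epsilon)^2$ together with the denominator bound $f_j+f_k+\gamma_R|f_j-f_k|+\epsilon\geqslant f_k+\epsilon$ to get each summand under $4/\epsilon$, and count multiplicities via the symmetry of the neighborhood relation to obtain $\|\nabla^2R\|_2\leqslant C/\epsilon$ with $C$ depending only on $\max_j|N_j|$. This buys two things. First, an explicit constant that exhibits the $1/\epsilon$ dependence, whereas the paper's $C_1$ is non-constructive. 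Second, and more importantly, your argument is watertight where the paper's justification has a gap: the decay claim is false in general, since if a single coordinate of $\bff$ tends to infinity while the others stay fixed, the summands of \eqref{Hessian_R} not involving that coordinate do not decay, so $h(\bff)$ need not vanish as $\|\bff\|_2\to\infty$ (it is merely bounded). Your elementary estimate sidesteps this issue entirely, so your route is not only different but more robust than the one in the paper.
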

	
	\begin{proof}
		From \eqref{Hessian_F}, one can obtain that 
		$$
		\|\nabla^2 F\|_2\leqslant{\|A\|_2^2\|g\|_{\infty}}/{\gamma_{\text{min}}},
		$$
		where $ \gamma_{\text{min}}>0 $ is the minimum entry of $ \bfgamma. $ 
		For any $ \bfx\in\bR^q$ with $\|\bfx\|_2=1 $ and $ \bff\in\bR^q_+, $ let $ h(\bff)\coloneqq \bfx ^\top\nabla^2 R(\bff)\bfx $. From \eqref{Hessian_R}, we know that $ h(\bff) $ is continuous on $ \bR^q_+ $ and $ \lim_{\|\bff\|_2\to\infty}h(\bff)=0 $.
		Thus there exists $ C_1>0 $ such that $ \left|h(\bff)\right|\leqslant C_1 $ for any $ \bfx $ with $ \|\bfx\|_2=1 $. Then we have $ \|\nabla^2 R\|_2\leqslant C_1 $ which, when combined with the boundedness of  $\|\nabla^2 F\|_2$, implies that 
		$$ \|\nabla^2\Phi\|_2\leqslant {\|A\|_2^2\|g\|_{\infty}}/{\gamma_{\text{min}}}+C_1. $$ Using Lemma 1.2.2 in \cite{Nesterov2004Introductory}, one can obtain that $ \nabla\Phi(\bff) $ is Lipschitz continuous with constant $ C\coloneqq  (\|A\|_2^2\|g\|_{\infty})/(\gamma_{\text{min}})+C_1 $ on $ \bR^q_+. $ 
	\end{proof}
	
	\section{}
	\label{appendix:lemconvergence}
	This appendix includes proofs of Lemma \ref{lem:LipConPrecond}, Lemma \ref{lem:monotone}, Lemma \ref{lem:convergence}, and Theorem \ref{thm:convergenceSDP}. Here is the proof of Lemma \ref{lem:LipConPrecond}.
	
	\begin{proof}
	We present only a proof of the  Lipschitz continuity and the uniform boundedness can be shown in a similar manner.
		For any $ \bff,\tilde{\bff}\in\cB,$ we consider the quantities
		$
		\Delta:=\bfS^{k,i}(\bff)\nabla\Phi_i(\bff)-\bfS^{k,i}(\tilde{\bff})\nabla\Phi_i(\tilde{\bff}),
		$
		$
		\Delta_1:=\nabla\Phi_i(\bff)-\nabla\Phi_i(\tilde{\bff}), \ \mbox{and} \  \Delta_2:=	\bfS^{k,i}(\bff)-\bfS^{k,i}(\tilde{\bff}).
		$
		
		By the triangle inequality and  the Cauchy–Schwarz inequality, we have that
		\begin{equation}\label{DELTA}
		\|\Delta\|_2
		\leqslant  \|\bfS^{k,i}(\bff)\|_2\|\Delta_1\|_2
		 + \|\nabla\Phi_i(\tilde{\bff})\|_2\|\Delta_2\|_2.
		\end{equation}
		According to condition (v), $\bfalpha^{k,i} $ is bounded. This together with the definition of $ \bfS(\bff)$ implies that $\bfS^{k,i}(\bff)\coloneqq\diag(\bfalpha^{k,i})\bfS(\bff)$ is bounded for all $k\in\bN_0, i\in\bN_M$ and $\bff\in\cB$. By condition (iii), there exists  $ c_2>0$ such that for all $k\in\bN_0,i\in\bN_M$ 
		\begin{equation}\label{c_2}
		\|\bfS^{k,i}(\bff)\|_2\|\Delta_1\|_2\leqslant c_2 \|\bff-\tilde{\bff}\|_2.
		\end{equation}
		We next prove that $ \bfS^{k,i}(\bff) $ are Lipschitz continuous on $ \cB $ with Lipschitz constants bounded above by a constant. For any $ \bff,\tilde{\bff}\in\cB, $ if both $f_j, \tilde{f}_j $ are in $ [0,U/2) $ or in $ [U/2,U],$ then
		$|S^{k,i}(\bff)_{jj}-S^{k,i}(\tilde{\bff})_{jj}| = |\alpha^{k,i}_j/p_j|\cdot |f_j-\tilde{f}_j|$.
		If $ f_j $ and $ \tilde{f}_j $ are not in the same interval, without lose of generality, assuming $ f_j\in[0,U/2)$, $\tilde{f}_j\in[U/2,U],$ then $|f_j-(U-\tilde{f}_j)|\leqslant|f_j-\tilde{f}_j|.$ Therefore, one can get
		$|S^{k,i}(\bff)_{jj}-S^{k,i}(\tilde{\bff})_{jj}| \leqslant|\alpha^{k,i}_j/p_j|\cdot|f_j-\tilde{f}_j|$.
		Thus, $\|\Delta_2\|_2\leqslant \|\bfalpha^{k,i}\|_2/p_0\|\bff-\tilde{\bff}\|_2$, where $p_0:=\min\{p_j: j\in\bN_q\}$. Since $ \bfalpha^{k,i} $ and $ \nabla\Phi_i(\tilde{\bff}) $ are bounded and $p_0>0$, there exists a constant $c_3>0$ such that for all $k\in\bN_0, i\in\bN_M$,
		\begin{equation}\label{c_3}
		\|\nabla\Phi_i(\tilde{\bff})\| \|\Delta_2\|_2\leqslant c_3\|\bff-\tilde{\bff}\|_2.
		\end{equation}
		Let $ L\coloneqq c_2+c_3.$ It follows from \eqref{DELTA}, \eqref{c_2} and \eqref{c_3} that
		$\|\Delta\|_2\leqslant L \|\bff-\tilde{\bff}\|_2$.
		That is, $ \bfS^{k,i}(\bff)\nabla\Phi_i(\bff) $ are Lipschitz continuous with Lipschitz constants bounded above by $L$ for all $ k\in\bN_0, i\in\bN_M. $
	\end{proof}
	
	The proof of Lemma \ref{lem:monotone} is presented as follows.
	
	\begin{proof}
		Let $ \bfb^{k,i}\coloneqq \diag(\bfalpha-\bfdelta^{k,i})\nabla\Phi_i(\bff^{k,i-1})$ for $ k\in\bN_0, i\in\bN_M.$ For $f^{k,i-1}_j\in(0,U/2)$, $ i\in\bN_M$, we have that $ S^{k,i}(\bff^{k,i-1})_{{j}{j}}=(\alpha_j-\delta^{k,i}_{j}) f^{k,i}_{j}/p_{j}$. By assumption, $\bff^{k,i}=P_{t}(\tilde{\bff}^{k,i})=\tilde{\bff}^{k,i}.$ The definition of $\bff^{k,i}$ yields 
		$$f^{k,i}_{j}=f^{k,i-1}_{j}(1-\lambda_k/p_j b^{k,i}_j),$$
		which implies 
		$$f^{k+1}_{j}=f^k_{j}\Pi_{i=1}^M(1-\lambda_k/p_j b^{k,i}_j).$$ By the boundedness of $ b^{k,i}_j/p_j, $ we have 
		$$ f^{k+1}_j=f^k_j(1-\lambda_k/p_j\sum_{i=1}^{M}b^{k,i}_j+\cO(\lambda_k^2)). $$ We next estimate $\sum_{i=1}^{M} \bfb^{k,i}.$ By definition, $\bfb^{k,i}=\diag(\bfalpha)\nabla\Phi_i(\bff^{k,i-1})-\diag(\bfdelta^{k,i})\nabla\Phi_i(\bff^{k,i-1}).$ To estimate the first term on the right hand side of the last equation, we write $\nabla\Phi_i(\bff^{k,i-1})=\Delta^{k,i}+\nabla\Phi_i(\bff^k)$, where $\Delta^{k,i}:=\nabla\Phi_i(\bff^{k,i-1})-\nabla\Phi_i(\bff^k)$. It follows that $ \sum_{i=1}^{M}\nabla\Phi_i(\bff^{k,i-1})=\sum_{i=1}^{M}\Delta^{k,i}+\nabla\Phi(\bff^k).$
		By condition (iii), there exists $c_4>0$ such that   
		$
		\|\Delta^{k,i}\|_2\leqslant c_4\|\bff^{k,i-1}-\bff^k\|_2.
		$
		Since $ \tilde{\bff}^{k,i}\in\mathrm{int\,}\cB$, by \eqref{eq:IterationInterior} and Lemma \ref{lem:LipConPrecond}, there exists a constant $c_5>0$ such that
		\begin{equation}\label{eq:e1}
		\|\bff^{k,i-1}-\bff^k\|_2 \leqslant c_5 M \lambda_k.
		\end{equation}
		Hence  $\|\Delta^{k,i}\|_2 =\cO(\lambda_k)$ and this gives that $ \sum_{i=1}^{M}\nabla\Phi_i(\bff^{k,i-1})=\nabla\Phi(\bff^k)+\cO(\lambda_k). $ Moreover, by condition (iii), we have that $ \|\sum_{i=1}^{M}\diag(\bfdelta^{k,i})\nabla\Phi_i(\bff^{k,i-1})\|_2=\cO(\delta_k). $ Therefore, $$\sum_{i=1}^{M}\bfb^{k,i}=\diag(\bfalpha)\nabla\Phi(\bff^k)+\cO(\lambda_k)+\cO(\delta_k). $$
		Thus, we obtain  \eqref{eq:increasing}.  
		
		Equation \eqref{eq:decreasing} may be shown in a similar manner. Indeed, for $f^{k,i-1}_j\in[U/2,U)$,  $ i\in\bN_M $, we have that $ S^{k,i}(\bff^{k,i-1})_{{j}{j}}=\alpha^{k,i}_{j} (U-f^{k,i}_{j})/p_{j}$. The definition of $ \bff^{k,i} $ yields that 
		$$ (U-f^{k+1}_{j})=(U-f^k_{j})\Pi_{i=1}^M(1+\lambda_k/p_j b^{k,i}_j). $$ This equation with similar arguments leads to  \eqref{eq:decreasing}.
	\end{proof}
	
	The proof of Lemma \ref{lem:convergence} is presented as follows.
	
	\begin{proof} We first prove part (a). Let $ H(\bff) $ denote the Hessian matrix of $ \Phi(\bff) $, `$ \odot $' denote the component-wise multiplication of two vectors, and ${\bfh}^k:=\bff^{k+1}-\bff^k$. By the Taylor expansion, we have that
			\begin{equation}\label{eq:Taylor}
				\begin{aligned}
				\Phi(\bff^{k+1})=\Phi(\bff^k)+(\nabla\Phi(\bff^k))^\top{\bfh}^k+R_k,
				\end{aligned}
			\end{equation}
		where $R_k:= ({\bfh}^k)^\top H(\bff^k+\bftheta\odot{\bfh}^k){\bfh}^k$, for some vector $ \bftheta\in\bR^q_+$ with $ \theta_j\in(0,1) $ for all $j\in \bN_q$.  We now estimate $ R_k. $ By condition (iii), $ \nabla\Phi(\bff) $  is Lipschitz continuous. Hence $H(\bff)$ is bounded on $\cB.$ Then we have $ |R_k| = \cO (\|{\bfh}^k\|_2^2)$. We next evaluate the term ${\bfh}^k$. For notation simplicity, we let
			$
			\bfe^{k,i}\!\coloneqq\! \bfS^{k,i}(\bff^k)\nabla\Phi_i(\bff^k)\!-\!\bfS^{k,i}(\bff^{k,i-1})\nabla\Phi_i(\bff^{k,i-1}),
			$
			and $\bfe^k:=\sum_{i=1}^M \bfe^{k,i}.$
			By Lemma \ref{lem:LipConPrecond}, we have  
			$
			\|\bfe^{k,i}\|_2\leqslant L\|\bff^{k,i-1}-\bff^k\|_2.
			$
			This combined with \eqref{eq:e1} implies that $ \|\bfe^{k,i}\|_2=\cO(\lambda_k), $ and thus
			$
			\|\bfe^k\|_2=\cO(\lambda_k).
			$
			By assumption, $ \tilde{\bff}^{k,i}\in\mathrm{int\,}\cB$, from \eqref{eq:IterationInterior}, we obtain that
			\begin{equation}\label{eq:ite_Lip}
			{\bfh}^k=-\lambda_k\sum_{i=1}^M\bfS^{k,i}(\bff^k)\nabla\Phi_i(\bff^k)+\lambda_k\bfe^k.
			\end{equation}
			Let  $\bfd^k\coloneqq \bfS(\bff^k)\sum_{i=1}^{M}\diag(\bfdelta^{k,i})\nabla\Phi_i(\bff^k)$, and $ J_k:= (\nabla\Phi(\bff^k))^\top \diag(\bfalpha)\bfS(\bff^k)\nabla\Phi(\bff^k)$. 
			Then $\|\bfd^k\|_2=\cO(\delta_k)$ by condition (v) and Lemma \ref{lem:LipConPrecond}, and $J_k\geqslant 0 $ by the positive semi-definiteness of $ \diag(\bfalpha)\bfS(\bff^k)$. Since $ \bfS^{k,i}(\bff^k)=\diag(\bfalpha-\bfdelta^{k,i})\bfS(\bff^k),$ then we have
			\begin{equation}
			\begin{aligned}
			\sum_{i=1}^M\!\bfS^{k,i}\!(\bff^k)\nabla\Phi_i(\bff^k)\!=\!\diag(\bfalpha)\bfS(\bff^k)\nabla\Phi(\bff^k)\!-\!\bfd^k.
			\end{aligned}
			\end{equation}
			Combining this with \eqref{eq:ite_Lip}, we have
			\begin{equation}\label{eq:recursive_iter}
			{\bfh}^k=-\lambda_k\diag(\bfalpha)\bfS(\bff^k)\nabla\Phi(\bff^k)+\lambda_k(\bfd^k+\bfe^k).
			\end{equation}
		By the boundedness of $\diag(\bfalpha)\bfS(\bff)\nabla\Phi(\bff)$ and the norm of $ \bfe^k $ and $ \bfd^k $, we have $\|\bfh^k\|_2=\cO(\lambda_k)$ and hence, $|R_k|=\cO(\lambda_k^2).$
		This combined with \eqref{eq:Taylor}, \eqref{eq:recursive_iter} and the boundedness of $\nabla\Phi(\bff)$ yields that
		\begin{equation}\label{eq:Taylorfinal}
		\Phi(\bff^{k+1})=\Phi(\bff^k)-\lambda_k J_k+\cO(\lambda_k\delta_k)+\cO(\lambda_k^2).
		\end{equation}
		For $ s\in\bN, $ summing both sides of \eqref{eq:Taylorfinal} for $k$ from $0$ to $s$, we obtain that
		\begin{equation}\label{eq:Cauchy}
	    \Phi(\bff^{s+1})=\Phi(\bff^{0})+\!\sum_{k=0}^{s}[-\lambda_k J_k+\cO(\lambda_k\delta_k)+\cO(\lambda_k^2)].
		\end{equation}
		We now prove the convergence of the right hand side of \eqref{eq:Cauchy}. By condition (vi), $\sum_{k=0}^\infty \lambda_k\bfdelta^{k,i}$ is convergent, and hence $\sum_{k=0}^\infty \lambda_k\delta_k$ is convergent. 
		Notice the facts we have in hand: (i) $\sum_{k=0}^\infty\lambda_k^2<\infty$ (by condition (iv)); (ii) the convergence of $ \sum_{k=0}^\infty \lambda_k\delta_k,$ 
		it remains to show that  $\sum_{k=0}^\infty\lambda_k J_k$ is convergent. In view of the facts (i), (ii) and the boundedness of $\Phi(\bff)$ on $\cB$, the partial sum $\sum_{k=0}^s\lambda_k J_k$ is bounded, which combined with its monotonicity  ($ \lambda_kJ_k\geqslant0 $) implies its convergence.
		
		We next prove part (b). Since for each $k$, $J_k\geq 0$, there exists a subsequence $J^{k_n}$ such that $\lim_{n\to\infty} J^{k_n}=0$. In fact, assume to the contrary that there exists $\epsilon_0>0$ and $K_0\in\bN_+$ such that $ J_k\geqslant\epsilon_0$, for all $k>K_0.$ Because $\sum_{k=0}^\infty\lambda_k=\infty$, by condition (iv), and $ \lambda_k>0,$ we would have $\sum_{k=0}^s\lambda_k J_k\geqslant\epsilon_0\sum_{k=0}^s\lambda_k\to\infty,$ as $s\to \infty$, a contradiction. Moreover, since $ \bff^{k_n} $ is bounded, there exists a convergent subsequence $ \bff^{k'_n}$ having the limit $\bff^*\in\cB$. Thus, $(\nabla\Phi(\bff^*))^\top \diag(\bfalpha)\bfS(\bff^*)\nabla\Phi(\bff^*)=\lim_{n\to\infty} J^{k'_{n}}=0.$ Letting $r_j:= (\partial/\partial f_j)\Phi(\bff^*)$ and $s_j:=S(\bff^*)_{jj}$, the last equation yields $\sum_{j=1}^{q}\alpha_j s_jr_j^2=0.$ Since $s_j\geqslant 0$ and $\alpha_j>0$, we have  for all $j\in\bN_q$ that $\alpha_j s_j r_j^2\geqslant 0$, which implies that $ s_j r_j=0$ for all $j\in\bN_q $. That is, $ \bfS(\bff^*)\nabla\Phi(\bff^*)={\bf 0}.$ 

		Finally, we show part (c). 
		According to \cite[Page 203]{polyak1987introduction}, it suffices to prove that for each $j\in\bN_q$,  (i) if $0<f^*_j<U$, then $r_j=0$; (ii) if $f^*_j=0$, then $r_j\geqslant0$; and  if $f^*_j=U$, then $r_j\leqslant0$. 
		Case (i) clearly follows from part (b), from which we have 
		$s_j r_j=0$ for all $ j\in\bN_q.$ By the definition \eqref{eq:preconditioner} of $\bfS(\bff)$, $s_j>0$ for $0<f^*_j<U$, and thus, $r_j=0$. 		
		
		It remains to prove case (ii). To this end, we let $\cJ_1\coloneqq\left\lbrace j'\in\bN_q: f^*_{j'}=0, r_{j'}<0\right\rbrace,$ $ \cJ_2\!\coloneqq\!\left\lbrace j''\in\bN_q\!:\! f^*_{j''}=U, r_{j''}\!>0\right\rbrace $, $ \cJ\coloneqq\cJ_1\!\cup\!\cJ_2$, and show $\cJ=\emptyset.$ Assume to the contrary that  $\cJ\neq\emptyset$. Then,  either $\cJ_1\neq\emptyset$ or $\cJ_2\neq\emptyset.$ Assume $\cJ_1\neq\emptyset,$ then for any $j'\in\cJ_1,$ since $\nabla\Phi(\bff)$ is continuous at $\bff^*$, there exists $ \delta\in(0,U/4) $ such that for all $\bff\in\cB_{\delta}:= \left\lbrace\bff\in\cB:\|\bff-\bff^*\|_2<\delta\right\rbrace$, there holds $(\partial/\partial f_{j'})\Phi(\bff)<0$. By Lemma \ref{lem:difference}, there exists $ K_1\in\bN $ such that for $ k>K_1, $  $ \|\bff^{k,i-1}-\bff^k\|_2<\delta. $ Then for $ k>K_1, $ if $ \bff^k\in\cB_{\delta}, $ we have $ \|\bff^{k,i-1}-\bff^*\|_2<2\delta<U/2, $ and hence, $ \bff^{k,i-1}_{j'}\in(0,U/2) $ for all $ i\in\bN_M, j'\in\cJ_1$. In this case, $ S^{k,i}(\bff^{k,i-1})_{{j'}{j'}}=\alpha^{k,i}_{j'} f^{k,i}_{j'}/p_{j'}$ for all $ i\in\bN_M,$ and hence Lemma \ref{lem:monotone} ensures that \eqref{eq:increasing} holds. Since $ (\partial/\partial f_{j'})\Phi(\bff^k)<0 $ for $ \bff^k\in\cB_\delta$ and $\lambda_k,\delta_k\to0$ as $ k\to\infty,$ then there exists $ K_2>K_1 $ such that if $ k>K_2 $ and $ \bff^k\in\cB_{\delta}, $ we have $ f^{k+1}_{j'}>f^k_{j'} $ for any $ {j'}\in\cJ_1. $   Therefore,  for $ k>K_2$, if $ \bff^k\in\cB_{\delta}$,  then we have $ f^{k+1}_{j'}>f^{k}_{j'} $ for any $ {j'}\in\cJ_1. $ Similarly, if $\cJ_2\neq\emptyset,$ for any $j''\in\cJ_2,$ there exists $ K_3>K_2 $ such that for $ k>K_3$, if $ \bff^k\in\cB_\delta $, then \eqref{eq:decreasing} ensures that $ f^{k+1}_{j''}>f^k_{j''} $.\par
		
		Since $\lim_{n\to\infty}\bff^{k'_n}=\bff^*,$ there exists $ K_4>K_3 $ such that if $ k'_n>K_4$, $ \bff^{k'_n}\in\cB_{\delta}. $  
		Suppose $ k'_{n_0}>K_4 $ for some $ n_0. $ Let $t_n:=\max\left\lbrace k: K_4< k<k'_{n+n_0}, \bff^k\notin\cB_{\delta}\right\rbrace.$ If  for some $n,$ $\bff^k\in\cB_{\delta}$ for all $K_4<k<k'_{n+n_0}$, set $t_n:=K_4,$ and hence $ t_n\geqslant K_4. $ Clearly, we have $\bff^k\in\cB_{\delta}$ if $t_n+1\leqslant k\leqslant k'_{n+n_0}$. Moreover, $t_n$ is a monotone increasing sequence. Then, either (a) $\lim_{n\to\infty}t_n=t_0\geqslant K_4,$ or (b)  $\lim_{n\to\infty}t_n=+\infty.$ 
		If it is the case (a), then $\bff^k\in\cB_{\delta}$ for all $k>t_0$. Thus, for $m>l>t_0$ that $f^m_{j'}>f^l_{j'}>0$ for $j'\in\cJ_1$. This contradicts the fact that $f^*_{j'}=0$ for $j'\in\cJ_1$. Hence, it must be the case (b). Since $\bff^k\in\cB_{\delta}$ for $t_n+1\leqslant k\leqslant k'_{n+n_0}$, we have that $f^{k'_{n+n_0}}_{j'}\geqslant f^{t_n+1}_{j'}>0$ for $j'\in\cJ_1$ and $f^{k'_{n+n_0}}_{j''}\leqslant f^{t_n+1}_{j''}<U$ for $ j''\in\cJ_2 $. It follows that $\lim_{n\to\infty}f^{t_n+1}_{j'}=0$ for $j'\in\cJ_1$ and $\lim_{n\to\infty}f^{t_n+1}_{j''}=U$ for $j''\in\cJ_2$. Then, Lemma \ref{lem:difference} ensures that $\lim_{n\to\infty}f^{t_n}_{j'}=0$ for $j'\in\cJ_1$ and $\lim_{n\to\infty}f^{t_n}_{j''}=U$ for $j''\in\cJ_2.$ Thus, we can find a convergent subsequence $ \bff^{t_{n_l}}$ of $ \bff^{t_n}$ such that $\lim_{l\to\infty}\bff^{t_{n_l}}=\bff^{**}$ with $f^{**}_{j'}=0$ for $j'\in\cJ_1 $ and $f^{**}_{j''}=U$ for $j''\in\cJ_2. $ Since $\bff^{t_n}\notin\cB_{\delta}$, we observe that $\bff^{**}\notin\cB_{\delta}$, which ensures that $\bff^{**}\neq\bff^*$. By part (a), $\Phi(\bff^k)$ is convergent, which implies that $\Phi(\bff^{**})=\Phi(\bff^*).$ Let $\cD\coloneqq\left\lbrace\bff\in\cB: f_{j'}=0 \text{~for~} j'\in\cJ_1 \text{ and } f_{j''}=U \text{ for } j''\in\cJ_2\right\rbrace$. Thus, $\bff^*,\bff^{**}\in\cD.$ It can be verified for any $\bff\in\cD$ that $\left\langle\bff-\bff^*,\nabla\Phi(\bff^*)\right\rangle\geqslant0$. By \cite[Page 203]{polyak1987introduction}, $\bff^*$ is a minimizer of $\Phi$ over $\cD$. Hence, $\Phi$ has two different minimizers $\bff^*$ and $\bff^{**}$ over the convex set $\cD.$ This contradicts the assumption that $\Phi$ has a unique minimizer on $\cB$. Thus, we have that $\cJ=\emptyset.$
	\end{proof}
	Here is the proof of Theorem \ref{thm:SpecificSDPs}.
	
	\begin{proof}
	    From Proposition \ref{prop:StrictlyConvex} and Proposition \ref{prop:LipGrad}, we have that $ \Phi $ satisfies conditions (i)-(iii). One can directly obtain that the relaxation sequence $ \lambda_k $ satisfies condition (iv).\par
		By Theorem \ref{thm:convergenceSDP}, to prove the convergence of SDP-BSREM algorithm with $ \lambda_k $ and $\bfS^{k,i}$, it is sufficient to show that $\lambda_k$ and $\bfS^{k,i}$ satisfy conditions (v) and (vi). To do this, for the subiteration-dependent preconditioner $\bfS^{k,i}(\bff)=\diag(\alpha_{k,i}\bfnu^{k,i})\bfS(\bff)$, one need to show that $ \lim_{k\to\infty}\alpha_{k,i}=\alpha>0$ for all $ i\in\bN_M $, and $ \sum_{k=0}^{\infty}\lambda_k(\alpha-\alpha_{k,i}) $ converges for all $ i\in\bN_M$ since $\bfnu^{k,i}$ is a positive vector sequence and
		$\bfnu^{k,i}=\bfnu^{k_1,M}$ for $k> k_1$ and $i\in\bN_M.$\par
		For $ \alpha_{k,i} $ defined in \eqref{preconditioner:Nesterov}, the sequence $ t_{k,i} $ is increasing. By induction, we have that $ t_{k,i}>(kM+i)/2. $ Further, it can be shown that $ \lim_{k\to\infty}t_{k,i}/k=M/2 $, and thus $ \lim_{k\to\infty}t_{k,i}/t_{k,i+1}=1 $ for all $ i\in\bN_M. $ Then we can obtain $ \lim_{k\to\infty}\alpha_{k,i}=1+\lim_{k\to\infty}(t_{k,i}-1)/t_{k,i+1}=2>0. $ By computing
		$
		2-\alpha_{k,i}=1/(2t_{k,i+1}(\sqrt{1+4t^2_{k,i}}+4t_{k,i}))+3/(2t_{k,i+1}),
		$
		we have that $ \lim_{k\to\infty}k(2-\alpha_{k,i})=3/M.$ Thus the series $ \sum_{k=0}^{\infty}\lambda_k(2-\alpha_{k,i}) $ converges since $ \sum_{k=0}^{\infty}1/k^2<\infty $. Therefore, for preconditioners P1 or M1 and relaxation $ \lambda_k, $ conditions (v) and (vi) are satisfied. \par
		For $\alpha_{k,i}$ defined in \eqref{preconditioner:KM}, we have that $ \alpha_{k,i} $ is monotone and $ \lim_{k\to\infty}\alpha_{k,i}=\varrho>0. $ It can be shown that $ \lim_{k\to\infty}k(\varrho-\alpha_{k,i})=(\varrho\delta_1-\delta_2)/M. $ Hence for precondition P2 or M2 and relaxation $ \lambda_k $, conditions (v) and (vi) are satisfied.
	\end{proof}
	
	\bibliographystyle{IEEEtran}
	
	\bibliography{SDP-BSREM_arxiv_20220608.bib}

\begin{thebibliography}{10}
\providecommand{\url}[1]{#1}
\csname url@samestyle\endcsname
\providecommand{\newblock}{\relax}
\providecommand{\bibinfo}[2]{#2}
\providecommand{\BIBentrySTDinterwordspacing}{\spaceskip=0pt\relax}
\providecommand{\BIBentryALTinterwordstretchfactor}{4}
\providecommand{\BIBentryALTinterwordspacing}{\spaceskip=\fontdimen2\font plus
\BIBentryALTinterwordstretchfactor\fontdimen3\font minus
  \fontdimen4\font\relax}
\providecommand{\BIBforeignlanguage}[2]{{%
\expandafter\ifx\csname l@#1\endcsname\relax
\typeout{** WARNING: IEEEtran.bst: No hyphenation pattern has been}%
\typeout{** loaded for the language `#1'. Using the pattern for}%
\typeout{** the default language instead.}%
\else
\language=\csname l@#1\endcsname
\fi
#2}}
\providecommand{\BIBdecl}{\relax}
\BIBdecl

\bibitem{Shepp1982maximum}
L.~A. Shepp and Y.~Vardi, ``Maximum likelihood reconstruction for emission
  tomography,'' \emph{IEEE Trans. Med. Imag.}, vol.~1, no.~2, pp. 113--122,
  Oct. 1982.

\bibitem{Lange1984em}
K.~Lange and R.~Carson, ``{EM} reconstruction algorithms for emission and
  transmission tomography,'' \emph{J. Comput. Assist. Tomogr.}, vol.~8, no.~2,
  pp. 306--316, 1984.

\bibitem{ahn2015quantitative}
S.~Ahn, S.~G. Ross, E.~Asma, J.~Miao, X.~Jin, S.~D.~W. Lishui~Cheng, and R.~M.
  Manjeshwar1, ``Quantitative comparison of {OSEM} and penalized likelihood
  image reconstruction using relative difference penalties for clinical
  {PET},'' \emph{Phys. Med. Biol.}, vol.~60, no.~15, pp. 5733--5751, 2015.

\bibitem{Hudson1994Accelerated}
H.~M. {Hudson} and R.~S. {Larkin}, ``Accelerated image reconstruction using
  ordered subsets of projection data,'' \emph{IEEE Trans. Med. Imag.}, vol.~13,
  no.~4, pp. 601--609, Dec. 1994.

\bibitem{Browne1996alternative}
J.~Browne and A.~R. De~Pierro, ``A row-action alternative to the {EM} algorithm
  for maximizing likelihood in emission tomography,'' \emph{IEEE Trans. Med.
  Imag.}, vol.~15, no.~5, pp. 687--699, Oct. 1996.

\bibitem{Bertsekas1997A}
D.~P. Bertsekas, ``A new class of incremental gradient methods for least
  squares problems,'' \emph{SIAM J. Optim.}, vol.~7, no.~4, pp. 913--926, 1997.

\bibitem{de2001fast}
A.~R. {De Pierro} and M.~E.~B. Yamagishi, ``Fast {EM}-like methods for maximum"
  a posteriori" estimates in emission tomography,'' \emph{IEEE Trans. Med.
  Imag.}, vol.~20, no.~4, pp. 280--288, Apr. 2001.

\bibitem{ahn2003globally}
S.~Ahn and J.~A. Fessler, ``Globally convergent image reconstruction for
  emission tomography using relaxed ordered subsets algorithms,'' \emph{IEEE
  Trans. Med. Imag.}, vol.~22, no.~5, pp. 613--626, May 2003.

\bibitem{tsai2018fast}
Y.-J. Tsai, A.~Bousse, M.~J. Ehrhardt, C.~W. Stearns, S.~Ahn, B.~F. Hutton,
  S.~Arridge, and K.~Thielemans, ``Fast quasi-{Newton} algorithms for penalized
  reconstruction in emission tomography and further improvements via
  preconditioning,'' \emph{IEEE Trans. Med. Imag.}, vol.~37, no.~4, pp.
  1000--1010, Apr. 2018.

\bibitem{rudin1992nonlinear}
L.~I. Rudin, S.~Osher, and E.~Fatemi, ``Nonlinear total variation based noise
  removal algorithms,'' \emph{Phys. D}, vol.~60, no. 1-4, pp. 259--268, 1992.

\bibitem{bredies2010total}
K.~Bredies, K.~Kunisch, and T.~Pock, ``Total generalized variation,''
  \emph{SIAM J. Imag. Sci.}, vol.~3, no.~3, pp. 492--526, 2010.

\bibitem{Geman1987Statistical}
S.~Geman and D.~E. McClure, ``Statistical methods for tomographic image
  reconstruction,'' \emph{Bull. Int. Statist. Inst.}, vol.~52, no.~4, pp.
  5--21, 1987.

\bibitem{Mumcuoglu1996Bayesian}
E.~{\"U}. Mumcuoglu, R.~M. Leahy, and S.~R. Cherry, ``Bayesian reconstruction
  of {PET} images: methodology and performance analysis,'' \emph{Phys. Med.
  Biol.}, vol.~41, no.~9, pp. 1777--1807, 1996.

\bibitem{Zhang2016investigation}
Z.~Zhang, J.~Ye, B.~Chen, A.~E. Perkins, S.~Rose, E.~Y. Sidky, C.-M. Kao,
  D.~Xia, C.-H. Tung, and X.~Pan, ``Investigation of optimization-based
  reconstruction with an image-total-variation constraint in {PET},''
  \emph{Phys. Med. Biol.}, vol.~61, no.~16, pp. 6055--6084, Aug. 2016.

\bibitem{chambolle2011first}
A.~Chambolle and T.~Pock, ``A first-order primal-dual algorithm for convex
  problems with applications to imaging,'' \emph{J. Math. Imag. Vis.}, vol.~40,
  no.~1, pp. 120--145, 2011.

\bibitem{krol2012preconditioned}
A.~Krol, S.~Li, L.~Shen, and Y.~Xu, ``Preconditioned alternating projection
  algorithms for maximum a posteriori {ECT} reconstruction,'' \emph{Inverse
  Problems}, vol.~28, no.~11, p. 115005, 2012.

\bibitem{li2015effective}
S.~Li, J.~Zhang, A.~Krol, C.~R. Schmidtlein, L.~Vogelsang, L.~Shen, E.~Lipson,
  D.~Feiglin, and Y.~Xu, ``Effective noise-suppressed and artifact-reduced
  reconstruction of {SPECT} data using a preconditioned alternating projection
  algorithm,'' \emph{Med. Phys.}, vol.~42, no.~8, pp. 4872--4887, 2015.

\bibitem{nuyts2002concave}
J.~Nuyts, D.~Bequ{\'e}, P.~Dupont, and L.~Mortelmans, ``A concave prior
  penalizing relative differences for maximum-a-posteriori reconstruction in
  emission tomography,'' \emph{IEEE Trans. Nucl. Sci.}, vol.~49, no.~1, pp.
  56--60, Feb. 2002.

\bibitem{wang2012penalized}
G.~Wang and J.~Qi, ``Penalized likelihood {PET} image reconstruction using
  patch-based edge-preserving regularization,'' \emph{IEEE Trans. Med. Imag.},
  vol.~31, no.~12, pp. 2194--2204, Dec. 2012.

\bibitem{wang2015edge}
------, ``Edge-preserving {PET} image reconstruction using trust optimization
  transfer,'' \emph{IEEE Trans. Med. Imag.}, vol.~34, no.~4, pp. 930--939, Apr.
  2015.

\bibitem{mumcuoglu1994fast}
E.~{\"U}. Mumcuoglu, R.~Leahy, S.~R. Cherry, and Z.~Zhou, ``Fast gradient-based
  methods for bayesian reconstruction of transmission and emission {PET}
  images,'' \emph{IEEE Trans. Med. Imag.}, vol.~13, no.~4, pp. 687--701, Dec.
  1994.

\bibitem{lin2019krasnoselskii}
Y.~Lin, C.~R. Schmidtlein, Q.~Li, S.~Li, and Y.~Xu, ``A krasnoselskii-mann
  algorithm with an improved {EM} preconditioner for {PET} image
  reconstruction,'' \emph{IEEE Trans. Med. Imag.}, vol.~38, no.~9, pp.
  2114--2126, Sep. 2019.

\bibitem{Ehrhardt2019fast}
M.~J. Ehrhardt, P.~Markiewicz, and C.-B. Sch{\"o}nlieb, ``Faster {PET}
  reconstruction with non-smooth priors by randomization and preconditioning,''
  \emph{Phys. Med. Biol.}, vol.~64, no.~22, p. 225019, Nov. 2019.

\bibitem{yu2002edge}
D.~F. Yu and J.~A. Fessler, ``Edge-preserving tomographic reconstruction with
  nonlocal regularization,'' \emph{IEEE Trans. Med. Imag.}, vol.~21, no.~2, pp.
  159--173, Feb. 2002.

\bibitem{Nesterov1983Method}
Y.~E. Nesterov, ``A method of solving a convex programming problem with
  convergence rate {$\mathit{O}$}($1/k^2$),'' \emph{Sov. Math. Dokl.}, vol.~27,
  no.~2, pp. 372--376, 1983.

\bibitem{beck2009fast}
A.~Beck and M.~Teboulle, ``A fast iterative shrinkage-thresholding algorithm
  for linear inverse problems,'' \emph{SIAM J. Imag. Sci.}, vol.~2, no.~1, pp.
  183--202, 2009.

\bibitem{Zeng2018FixFISTA}
X.~Zeng, L.~Shen, and Y.~Xu, ``A convergent fixed-point proximity algorithm
  accelerated by {FISTA} for the l0 sparse recovery problem,'' in
  \emph{Imaging, Vision and Learning Based on Optimization and PDEs}, X.-C.
  Tai, E.~Bae, and M.~Lysaker, Eds.\hskip 1em plus 0.5em minus 0.4em\relax
  Cham, Switzerland: Springer Int. Publishing, 2018, pp. 27--45.

\bibitem{kim2015combining}
D.~Kim, S.~Ramani, and J.~A. Fessler, ``Combining ordered subsets and momentum
  for accelerated {X}-ray {CT} image reconstruction,'' \emph{IEEE Trans. Med.
  Imag.}, vol.~34, no.~1, pp. 167--178, Jan. 2015.

\bibitem{Li2006A}
Q.~Li, E.~Asma, S.~Ahn, and R.~M. Leahy, ``A fast fully {4-D} incremental
  gradient reconstruction algorithm for list mode {PET} data,'' \emph{IEEE
  Trans. Med. Imag.}, vol.~26, no.~1, pp. 58--67, Jan. 2007.

\bibitem{schmidtlein2017relaxed}
C.~R. Schmidtlein, Y.~Lin, S.~Li, A.~Krol, B.~J. Beattie, J.~L. Humm, and
  Y.~Xu, ``Relaxed ordered subset preconditioned alternating projection
  algorithm for {PET} reconstruction with automated penalty weight selection,''
  \emph{Med. Phys.}, vol.~44, no.~8, pp. 4083--4097, 2017.

\bibitem{moses2011fundamental}
W.~W. Moses, ``Fundamental limits of spatial resolution in {PET},'' \emph{Nucl.
  Instrum. Methods Phys. Res. A, Accel. Spectrom. Detect. Assoc. Equip.}, vol.
  648, pp. S236--S240, Aug. 2011.

\bibitem{berthon2015petstep}
B.~Berthon, I.~H{\"a}ggstr{\"o}m, A.~Apte, B.~J. Beattie, A.~S. Kirov, J.~L.
  Humm, C.~Marshall, E.~Spezi, A.~Larsson, and C.~R. Schmidtlein, ``{PETSTEP}:
  generation of synthetic {PET} lesions for fast evaluation of segmentation
  methods,'' \emph{Phys. Med.}, vol.~31, no.~8, pp. 969--980, 2015.

\bibitem{GEPETToolbox5.0}
S.~Ross and K.~Thielemans, \emph{General Electric PET Toolbox Release 5.0},
  2011--2019.

\bibitem{Acr2012pet}
A.~A. of~Physicists~in Medicine \emph{et~al.}, \emph{{PET} phantom instructions
  for evaluation of {PET} image quality}, 2012.

\bibitem{macfarlane2006acr}
C.~R. MacFarlane, ``Acr accreditation of nuclear medicine and {PET} imaging
  departments,'' \emph{J. Nucl. Med. Technol.}, vol.~34, no.~1, pp. 18--24,
  Mar. 2006.

\bibitem{kim2019time}
K.~Kim, D.~Kim, J.~Yang, G.~El~Fakhri, Y.~Seo, J.~A. Fessler, and Q.~Li, ``Time
  of flight pet reconstruction using nonuniform update for regional recovery
  uniformity,'' \emph{Med. Phys}, vol.~46, no.~2, pp. 649--664, 2019.

\bibitem{Nesterov2004Introductory}
Y.~E. Nesterov, \emph{Introductory Lectures on Convex Optimization: A Basic
  Course}.\hskip 1em plus 0.5em minus 0.4em\relax New York: Kluwer, 2004.

\bibitem{polyak1987introduction}
B.~T. Polyak, \emph{Introduction to optimization}.\hskip 1em plus 0.5em minus
  0.4em\relax New York, NY, USA: Optim. Softw., 1987.

\end{thebibliography}

}

\end{document}